\documentclass[journal,twoside,web]{ieeecolor_no_firstpage_logo}
\usepackage{generic}
\usepackage{cite}
\usepackage{amsmath,amssymb,amsfonts}
\usepackage{mathtools}
\usepackage{algorithmic}
\usepackage{graphicx}
\usepackage{algorithm,algorithmic}
\usepackage{hyperref}
\hypersetup{hidelinks=true}
\usepackage{textcomp}
\usepackage{tabularx}
\usepackage{physics}
\usepackage{booktabs}
\usepackage{xcolor}
\usepackage{listings}
\usepackage{subcaption}

\DeclareCaptionFont{eightpt}{\fontsize{8}{9.6}\selectfont}

\usepackage{makecell}
\usepackage{ragged2e}

\newtheorem{definition}{Definition}
\newtheorem{theorem}{Theorem}
\newtheorem{corollary}{Corollary}
\newtheorem{remark}{Remark}

\def\BibTeX{{\rm B\kern-.05em{\sc i\kern-.025em b}\kern-.08em
    T\kern-.1667em\lower.7ex\hbox{E}\kern-.125emX}}
\begin{document}
\title{A Control-Theoretic Approach for Resource-Aware Consensus in Multi-Agent AI}
\author{James Flagg and Esteban A. Hernandez-Vargas 
\thanks{Submitted on Aug. 17, 2026. Supported by NSF Grant No.\ DMS-2315862.}
\thanks{James Flagg is with the Dept.\ of Mathematics and Statistical Science, University of Idaho, Moscow, ID 83843 USA. (e-mail: flag1779@vandals.uidaho.edu). }
\thanks{Esteban A. Hernandez-Vargas is with the Dept.\ of Mathematics and Statistical Science, University of Idaho, Moscow, ID 83843 USA. (e-mail: esteban@uidaho.edu).}
}

\maketitle

\begin{abstract}
Large language model multi-agent systems (LLM-MAS) rely on inter-agent communication to solve complex reasoning tasks, yet rigorous guarantees relating consensus performance to computational resources remain limited. 
Here, we present a novel way to characterize collective belief dynamics as a discrete-time switched system in which communication topologies have distinct consensus-contraction rates and token costs. By augmenting the belief dynamics with the remaining computational budget, we define a consensus safe set that jointly captures agreement and resource feasibility. We derive explicit bounds on consensus time and token expenditure and construct a consensus-budget certificate region guaranteeing finite-time convergence without resource exhaustion. 
We further establish conditions under which adaptive topology switching achieves a trade-off between convergence speed and communication cost relative to fixed-topology strategies. Numerical experiments and live LLM-MAS deployments show the predicted consensus-cost trade-offs, demonstrating how control-theoretic certificates can enable resource-aware coordination in AI systems.
\end{abstract}

\begin{IEEEkeywords}
Switched systems, Intelligent systems, Consensus, Multi-agent systems, Modeling
\end{IEEEkeywords}

\section{Introduction}
\label{sec:introduction}
\IEEEPARstart{L}{arge} Language Models (LLMs) are transforming society as rapidly as they are evolving technically. 
The base unit of measurement for LLM computational cost is the token \cite{bib4}. Tokens are vectors which encode inputs, outputs, and thought processes \cite{bib5}. 
Token expenditures for Chat-GPT alone, though not publicly disclosed, could reach into the quadrillions as Chat-GPT processes 2.5 billion prompts every day \cite{bib6}. Experts document repercussions from these high computational costs. Training GPT-5 required 1 billion liters of water \cite{bib6}. AI electricity consumption in 2025 reached 93TWh, enough to supply Nigeria for two years \cite{bib6}. The data centers which power LLMs are taking their position among nations in the global economy. By 2030, data centers are projected to rank 6th globally in electricity consumption, just behind Japan \cite{bib6}. The projected water use of data centers, amounting to 9.3 trillion liters, could meet the domestic needs of Sub-Saharan Africa's 1.3 billion population. 

\begin{table*}[ht!]
\centering
\caption{Comparative Summary of Frontier Large Language Models (2026)}
\label{tab:model_comparison}
\small
\begin{tabularx}{\textwidth}{l X X c}
\toprule
\textbf{Model} & \textbf{Technical Focus \& Capabilities} & \textbf{Major Applications} & \textbf{Benchmarks} \\
& & & {\scriptsize (GPQA / SWE)} \\
\midrule

\makecell[l]{\textbf{GPT-5.5} \\ {\scriptsize (OpenAI)}} & 
The GPT-5 generation performs 3x better than previous models in the area of sycophancy, enabling more effective collaboration among agents \cite{bib7}. & 
Wide range of personal and corporate tasks, including coding and writing & 
93.18\% \newline { / 82.6\%} \\
\addlinespace

\makecell[l]{\textbf{Gemini 3.1 Pro} \\ {\scriptsize (Google)}} & 
With a 1M token context-window, Gemini 3.1 specializes in solving large problems in a cross-modal context \cite{bib8}. & 
Tasks involving video and audio & 
95.45\% \newline { / 78.8\%} \\
\addlinespace

\makecell[l]{\textbf{Claude Opus 4.8} \\ {\scriptsize (Anthropic)}} & 
Dynamic workflows deploy multiple agents in parallel to complete large tasks from planning to execution \cite{bib9}. & 
Agentic coding & 
92.42\% \newline { / 88.6\%} \\
\addlinespace

\makecell[l]{\textbf{DeepSeek R1} \\ {\scriptsize (DeepSeek)}} & 
Advanced problem-solving capabilities stem from an RL (reinforcement learning)  framework which rewards correct outcomes achieved via explorative reasoning processes \cite{bib10}. & 
Cost-effective math and logic & 
71.5\% \newline { / 49.2\%} \\
\addlinespace

\makecell[l]{\textbf{LLaMa 4 Mav.} \\ {\scriptsize (Meta)}} & 
Mixture-of-Experts (MoE) framework delegates sub-layers of agents to analyze up to 10M tokens \cite{bib11}. & 
LLaMa’s weights are publicly accessible, enabling reproducibility in simulations. & 
67.1\% \newline { / 21.04\%} \\

\bottomrule
\end{tabularx}
\end{table*}



These resource demands become particularly consequential in agentic AI, where solving a single task may require repeated LLM inference, tool calls, and communication across multiple reasoning cycles.
This shift from one-shot inference toward iterative autonomous reasoning is exemplified by the \emph{ReAct} framework, which advanced LLMs to autonomous agents through a closed-loop reasoning-acting cycle \cite{bib16}. First, the model reasons about the current state. Based on its assessment, the model issues a tool call. Finally, the model observes the results before restarting the loop.
Multi-Agent Systems (MAS) extend this paradigm by coordinating multiple LLM agents to complete long and complex tasks. These systems build upon and scale the core ideas of ReAct, incorporating new architectures for inter-agent relationships and maintaining persistent memory across the team \cite{bib17}.



Reaching \textit{consensus} is the goal of coordination for MAS. When agents in the system achieve consensus, their individual belief states align. In practice, consensus is achieved when the difference between individual belief and the collective average is less than a given tolerance $\varepsilon >0.$ While we use the term MAS to refer specifically to teams of LLMs, it should be noted that there are many types of Multi-Agent Systems, spanning a variety of contexts. MAS consensus is a wide field of research with deep classical roots 
\cite{bib19,bib20,bib21}. 

Despite rapid empirical progress, the theoretical foundations of agentic AI remain underdeveloped \cite{bib22,bib23,bib24}. As agentic systems form the basis for an increasing range of applications, from software development to policy simulation \cite{bib28}, the development of a theoretical framework for robust safety constraints is correspondingly necessary. Unsafe behaviors in MAS include hallucinations, infinite reasoning regressions, and high computational costs due to repeated tool calls and long generations. 
Amodei et al.~\cite{bib29} enumerate concrete failure
modes---reward misspecification, distributional shift, safe
exploration, and robustness to distributional shift---and call for
formal methods to address them.
Dalrymple et al.~\cite{bib30} articulate a framework
for ``guaranteed safe AI'' requiring machine-checkable safety proofs,
but note that no general formal model for LLM-based agents yet exists
at the required level of rigor.

In Table \ref{tab:model_comparison}, we summarize the distinguishing features of the frontier LLM platforms, with a special interest in their aptitude for consensus-driven MAS. Adherence to academically standard metrics for LLM performance is essential when evaluating a model. The GPQA Diamond database contains 448 multiple-choice questions in biology, physics, and chemistry; PhD experts average 65\% accuracy, while non-experts with internet access achieve 34\% \cite{bib25}. The GPQA Diamond score evaluates a model's reasoning and logic skills. To measure a model's ability to carry out a task, an essential skill for agentic coding, we use the SWE-bench score. Jimenez et al. introduced their testing framework of 2,294 software engineering problems in 2024 \cite{bib26}. Because resolving bugs requires complex interaction with an agent's environment, nuanced analysis of large repositories, and involved planning, the SWE-bench is an accurate marker of an agent's viability. The benchmark scores are sourced from the frontier rankings on the Vals AI Leaderboard \cite{bib27}. 

Individual LLMs operate as open-loop systems, mapping an input prompt to an output token sequence without internal monitoring or feedback. An LLM-MAS, on the other hand, self-evaluates at every iteration of a closed-loop workflow. The harness, which is the architecture that unifies the agents, acts as a \textit{controller} that receives internal assessments and adjusts the system trajectory accordingly. These internal assessments could be the number of failed unit tests (in the context of agentic coding), a transcript which an ``evaluator" LLM generates, or simply the internal belief states of the agents in the system. The MAS evolves in discrete time, where each cycle of the workflow increments a discrete-time index which chronologizes policy changes, completion of communication rounds, and input from the harness.

Zhang et al. establish how the harness architecture unites multiple agents into a dynamical system, demonstrating how a change in the harness can greatly effect overall performance \cite{bib31}. Recent work has led to the development of many harness architectures with specific optimization goals. The VeRO harness enables agents to improve other agents by modifying their source code, working within a computational budget \cite{bib32}.
Due to the complexity of MAS, current research focuses on a subset of the system state to model as a dynamical system. For instance, Eslami et al. focus on runtime agency and interactions between system modifications, characterizing stability as delay times beneath a certain threshold \cite{bib33}. 

\textit{Contributions:} This paper develops a control-theoretic framework for resource-aware consensus in LLM-based multi-agent systems. 
We incorporate token budget directly into the system state and study whether a multi-agent reasoning process can be certified to reach practical consensus before exhausting its available resources. The main contributions are as follows.

\textit{i)} LLM-MAS coordination is formulated as a discrete-time switched system in which each mode represents a communication topology with an associated consensus-contraction rate and token cost. The harness acts as a supervisory controller that selects the active communication topology according to the collective disagreement among agents.

\textit{ii)} A consensus-budget certificate region is constructed to provide a sufficient condition, in terms of initial disagreement and available budget, for finite-time convergence to $\varepsilon$-consensus without resource exhaustion. We further establish conditions under which adaptive topology switching achieves a trade-off between the convergence speed of dense communication and the lower cost of sparse communication.

\textit{iii)} The theoretical predictions are evaluated through numerical simulations and live LLM-MAS deployments in a software-architecture planning task. Adaptive, dense, and sparse communication strategies are compared to characterize the interplay among disagreement contraction, communication topology, and computational cost.

To the best of our knowledge, this is the first control-theoretic formulation that jointly models consensus and computational-budget dynamics in LLM-MAS and provides explicit certificates linking communication topology, convergence, and token expenditure.




\section{Preliminaries and Problem Formulation}
Throughout, $\mathbb{R}$ denotes the field of real numbers. $\mathbb{R}^d$ denotes the set of vectors of dimension $d$ whose entries are real numbers. Similarly, $\mathbb{R}^{N\times N}$ denotes the set of $N \times N$ matrices whose entries are real numbers. $\mathbb{Z}$ denotes the ring of integers, while $\mathbb{Z}_{\geq 0} = \{ x \in \mathbb{Z} : x \geq 0 \}$ denotes the subset of nonnegative integers. $\mathbb{N}$ is the set of natural numbers $\{1,2,3,...\}$. For a matrix $M$, $M^\top$ represents the transpose of $M.$ We denote the $N$-dimensional column vector of ones in bold: $\textbf{1} \in \mathbb{R}^N$. In a directed graph $\mathcal{G}$ with edge-set $E$ and vertices-set $V$, we use $V \times V = \{ (v,u) | u \in V \text{ and } v \in V\} $ to denote the set of ordered pairs where each pair corresponds to an edge from vertex $v$ to vertex $u.$

We use three norms in the framework that follows. For a vector $v \in \mathbb{R}^d,$ we write $\left\| v\right\|_2$ to denote the standard Euclidean distance norm, defined as:
$$
\left\| v\right\|_2 = \sqrt{\sum_{i = 1}^dv_i^2}.
$$
For a matrix $M \in \mathbb{R}^{N \times N}$, the Frobenius norm $\left\| M \right\|_F$ measures the overall size of the  matrix. It is an extension to matrices of the Euclidean norm, defined as:
$$
\left\| M \right\|_F = \sqrt{\sum_{i=1}^N\sum_{j=1}^Na_{ij}^2}.
$$
The spectral norm of a matrix $M \in \mathbb{R}^{N \times N}$ quantifies how much $M$ can stretch or amplify a vector $x \in \mathbb{R}^N:$ 
$$
\left\| M \right\|_2 = \max_{x \neq 0} \frac{\left\| M x\right\|_2}{\left\| x \right\|_2}.
$$
Notice that the norms on the right hand side of this equivalence are both Euclidean, since $Mx$ is a column vector of dimension $N.$
For an action matrix $W$ and subspace $\textbf{s}$, we write $W\big|_{\textbf{s}}$ to show that we are restricting $W$ to $\textbf{s}$. For $a \in \mathbb{R},$ we write $\lceil a \rceil$ to apply the ceiling function to $a.$

\begin{figure}
    \centering
    \includegraphics[width=1\linewidth]{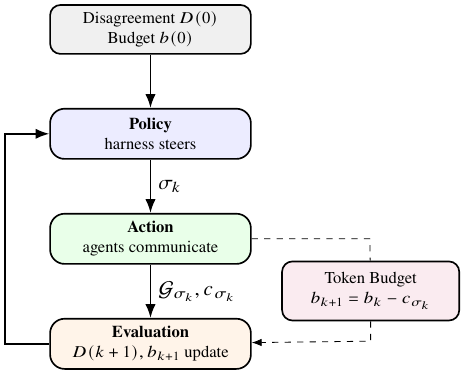}
    \caption{MAS communication loop as a hybrid switched feedback control system. The harness selects a communication topology (switching signal $\sigma_k$) based on disagreement level $D(k)$ and remaining token-budget $b_k$; the agents debate according to graph $\mathcal{G}_{\sigma_k}$, accruing computational cost $c_{\sigma_k}$; evaluation of the updated disagreement $D(k+1)$ and budget $b_{k+1}$ drives the next policy. Token consumption is dictated by the communication topology.}
    \label{fig:fig1}
\end{figure}

\subsection{Problem Statement}

To model consensus-budget dynamics, we exploit a key architectural feature of LLM-MAS: the harness can regulate inter-agent communication by activating different communication topologies during the reasoning process. We therefore model the MAS as a discrete-time switched system, where the discrete-time index $k$ represents communication rounds and the switching signal $\sigma_k$ selects the active communication topology.

Under mode $\sigma_k$, the communication topology is represented by the directed graph $\mathcal{G}_{\sigma_k}=(V,E_{\sigma_k})$ and its associated weight matrix $W_{\sigma_k}$, which determines how agents incorporate the belief states of their neighbors. Different communication modes induce different rates of disagreement contraction and incur different computational costs. The harness acts as a supervisory controller that selects $\sigma_k$ according to the aggregate disagreement $D(k)$, thereby steering the evolution of the collective belief state. The resulting closed-loop architecture is illustrated in Fig.~\ref{fig:fig1}.

\begin{figure}
    \centering
    \includegraphics[width=1\linewidth]{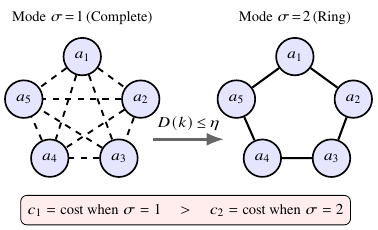}
    \caption{Five LLM agents under norm-threshold switching. Mode~1 (complete $K_5$, costlier) is activated when the aggregate disagreement satisfies $D(k)>\eta$; Mode~2 (ring $C_5$, cheaper) is activated when $D(k)\leq\eta$.}
    \label{fig:fig2}
\end{figure}

The control objective is to drive the MAS to practical consensus, $D(k)\leq\varepsilon$, without exhausting the initial token budget $b_0$. This introduces a fundamental trade-off: dense communication topologies may contract disagreement rapidly but incur higher token costs, whereas sparse topologies reduce communication cost at the expense of slower convergence. We address this trade-off through norm-threshold switching: a dense communication mode is activated when $D(k)>\eta$, while a lower-cost sparse mode is used once $D(k)\leq\eta$, as illustrated in Fig.~\ref{fig:fig2}.

Accordingly, the problem addressed in this paper is to determine conditions under which the switching policy guarantees that the MAS reaches an $\varepsilon$-consensus state before its computational budget is exhausted. To this end, we augment the collective belief state with the remaining budget and define the $\varepsilon$-consensus safe set $\Omega_\varepsilon$. We then characterize a consensus-budget certificate region $\Omega_{\mathrm{cert}}$ whose states possess sufficient computational resources to guarantee finite-time reachability of $\Omega_\varepsilon$.

\section{Multi-agent Systems Consensus Framework}\label{sec:model}
We work under the following assumptions: (i) \textit{synchronous activation}, which means that all agents participate in every communication cycle, (ii) \textit{zero communication delay}, ensuring that belief states communicated among the agents are current, and (iii) \textit{exogenous switching mechanism}, where the harness architecture determines the communication topologies, not the agents themselves. 

We begin by describing an agent in terms of its belief state. Since we define consensus (the reconciliation of all belief states) as the ``goal" state of the MAS, this definition is appropriate for our framework.

\begin{definition}[Agent]\label{def:agent}
In a team of $N \in \mathbb{N}$ agents, each \emph{agent} is a reasoning unit
$
a_i$ where 
$i\in\{1,\ldots,N\},$
equipped with an internal belief state
$
x_i(k)\in\mathbb R^d
$
at communication round $k\in\mathbb Z_{\ge0}$. We index the discrete time evolution of the system by $k$, which updates every time the agents communicate according to a given topology. Communication among agents includes sharing information, registering disagreement, and proposing modifications.
The vector $x_i(k)$ represents the current $d$-dimensional assessment,
proposal, preference vector, or structured reasoning state of agent
$a_i$, for some $d \in \mathbb{N}$.

The collective state of the multi-agent system is the matrix
\[
\mathbf{X}_k
=
\begin{bmatrix}
x_1(k)^\top\\
x_2(k)^\top\\
\vdots\\
x_N(k)^\top
\end{bmatrix}
\in\mathbb R^{N\times d}.
\]
\end{definition}

While $\mathbf{X}_k$ denotes an internal summary of the MAS, it does not contain information about the external organizing architecture of the harness. We describe subsystems of this architecture as communication topologies, which we model as communication graphs. 

\begin{definition}[Communication Graph]\label{def:commgraph}
A \emph{communication graph} is a directed graph
$
\mathcal G_\sigma=(V,E_\sigma),
$ where 
$
\sigma\in\Sigma:=\{1,\ldots,m\} \text{ for } m \in \mathbb{N}.
$
The vertices 
$
V=\{a_1,\ldots,a_N\}
$
are the set of agents and edges $E_\sigma\subseteq V\times V$ are the set of
communication links active under mode $\sigma$. $\Sigma$ is the set of topologies, where each topology patterns a mode of communication.
Each graph $\mathcal G_\sigma$ is encoded by a nonnegative weight matrix
$
W_\sigma=[w_{ij}^{(\sigma)}]\in\mathbb R^{N\times N},
$
where $w_{ij}^{(\sigma)}>0$ means that agent $a_i$ incorporates
information from agent $a_j$ under mode $\sigma$. In graphical terms, the $ij$-th entry in $W_\sigma$ determines the edge between vertex $a_i$ and vertex $a_j.$

\begin{definition}[Doubly Stochastic]\label{def:doubly stochastic}
A real matrix $M$ is doubly stochastic if all its entries are nonnegative, and the sum of the entries in every row and every column is exactly 1.
\end{definition}

We assume that each $W_\sigma$ is doubly stochastic. Hence, elementary properties of matrix multiplication guarantee that
\[
W_\sigma\mathbf{1}=\mathbf{1},
\qquad
 W_\sigma^\top \mathbf{1}=\mathbf 1,
\]
where $\mathbf 1\in\mathbb R^N$ is the homogeneous vector of all ones.
\end{definition}
Now we formally define consensus, the central coordination objective of the harness. 

\begin{definition}[Consensus]\label{def:consensus}
The agents are said to be in \emph{consensus} at round $k$ if there
exists a vector $\bar x_k\in\mathbb R^d$ such that
$
x_1(k)=x_2(k)=\cdots=x_N(k)=\bar x_k .
$
Equivalently,
$
\mathbf{X}_k=\mathbf 1\,\bar x_k^\top .
$ The set
\[
\mathcal C
=
\left\{
\mathbf{X}\in\mathbb R^{N\times d}:
\mathbf{X}=\mathbf 1 c^\top,\; c\in\mathbb R^d
\right\}
\]
is called the \emph{consensus manifold}. The consensus manifold describes the set of all possible MAS aggregate belief matrices where each of the rows, representing individual agentic belief states, are the same.
\end{definition} 

By this definition, the agents have reached consensus when their individual belief vectors $x_i(k)$ are all equivalent. Until the MAS reaches consensus, the aggregate disagreement level of the system is $> 0$. 
\begin{definition}[Disagreement]\label{def:disagreement}
We define disagreement as the collective difference between each agent's belief state and the \textit{average} of all the agents' belief states. The system state $\mathbf{X}_k$, therefore, can be decomposed as:
$$
\mathbf{X}_k = Average + Disagreement.
$$
Mathematically, we write this as 
\begin{equation}
\mathbf{X}_k = \frac{1}{N}\textbf{1}\textbf{1}^\top \mathbf{X}_k+ P\mathbf{X}_k
\end{equation}
where $P\mathbf{X}_k$ is the \textit{disagreement component} of $\mathbf{X}_k.$
Notice that $\textbf{1}\textbf{1}^\top$ yields a $N \times N$ matrix where every entry is 1. Multiplying this matrix by $\mathbf{X}_k$ and then dividing every entry by $N$ results in homogeneous column vectors $v_1,...v_N$, where each entry in $v_j$ corresponds to the average of all entries in column $j$ of $\mathbf{X}_k.$
We define the disagreement subspace
\begin{equation}
\mathbf 1^\perp
=
\{v\in\mathbb R^N:\mathbf 1^\top v=0\}.
\end{equation}
This subspace consists of vectors whose entries sum to 0. Because the sum of deviations from a mean must be 0, the disagreement matrix
\begin{equation}
P
=
I-\frac1N\mathbf 1\mathbf 1^\top
\end{equation}
is the orthogonal projection matrix onto the disagreement subspace.

The aggregate disagreement level measures the total disagreement among all the agents in the MAS. We define it as
\begin{equation} 
D(k)
=
\frac1{\sqrt N}\|P\mathbf{X}_k\|_F .
\label{eq:disagreement}
\end{equation}
The Frobenius norm quantifies the size of the disagreement component, and we normalize by $\frac{1}{\sqrt{N}}$ to prevent the number of agents $N$ from distorting the disagreement level. 
Then
$
D(k)=0
$
if and only if $\mathbf{X}_k\in\mathcal C$, that is, if and only if the agents
are in consensus.
\end{definition}

The harness drives the aggregate disagreement level beneath a tolerance $\varepsilon > 0$ through communication between the agents. Differences in communication topologies effect the speed at which consensus is reached. When each agent can communicate with every other agent, consensus can be reached faster. The graph designating this topology would be a connected graph $K_N$, as shown in Fig. \ref{fig:fig2}. Clearly, in a topology where none of the agents communicated with each other, the aggregate disagreement level would remain the same, and the system would not reach consensus. We call the rate at which a particular communication topology changes the aggregate disagreement level the consensus-contraction rate.

\begin{definition}[Consensus-Contraction Rate]\label{def:contraction}
For a communication mode $\sigma\in\Sigma$, the
\emph{consensus-contraction rate} $\lambda_\sigma \in \mathbb{R}$ is defined as
\[
\lambda_\sigma
:=
\left\|
W_\sigma\big|_{\mathbf 1^\perp}
\right\|_2 .
\]
Equivalently, $\lambda_\sigma$ is the smallest constant satisfying
\[
\|W_\sigma v\|_2
\le
\lambda_\sigma\|v\|_2,
\qquad
\forall v\in\mathbf 1^\perp .
\]

The communication mode $\sigma$ is called \emph{consensus contracting}
if
$
\lambda_\sigma<1.
$
In this case, one communication round under mode $\sigma$ strictly
contracts the disagreement component.
\end{definition}

 Now we model how the collective belief state $\mathbf{X}_k$ (Definition \ref{def:agent}) evolves at each communication round $k.$ We capture the belief dynamics via a switched affine map:

\begin{equation}
\mathbf{X}_{k+1}
=
W_{\sigma_k}\mathbf{X}_k,
\qquad
k\in\mathbb Z_{\ge0},
\label{eq:consensus}
\end{equation}
where $\sigma_k\in\Sigma$ is the active communication mode.

\begin{remark}[Interpretation as Agentic Deliberation]
The switched system \eqref{eq:consensus} provides an abstract model of
a multi-agent reasoning process. At each communication round, the
active graph $\mathcal G_{\sigma_k}$ determines which agents exchange
information and how strongly neighboring opinions influence one
another through the weight matrix $W_{\sigma_k}$.
Because $W_{\sigma_k}$ is doubly stochastic, each update preserves the
collective average belief while reducing disagreement among agents.
The switching signal $\sigma_k$ therefore acts as a communication
policy that dynamically selects the interaction topology according to
the current level of disagreement. In the context of agentic AI,
different modes may correspond to sparse communication among a small
set of agents or dense deliberation involving the entire team,
thereby trading communication cost against consensus speed.
\end{remark}

\begin{remark}[Invariance of $W_\sigma$]
Since each matrix $W_\sigma$ is doubly stochastic,
every row of \eqref{eq:consensus} is a weighted average of the beliefs
received from neighboring agents in the communication graph
$\mathcal G_{\sigma_k}$. Furthermore, the disagreement subspace $\mathbf 1^\perp$ is invariant with respect to $W_\sigma.$ We can see this as follows. The disagreement subspace $\mathbf 1^\perp$ is the orthogonal complement space to the agreement subspace, which is $\text{span}(\mathbf 1)$. Thus, $\forall v \in \mathbf 1^\perp, w^\top v = 0$, where $w \in \text{span}(\mathbf 1)$. But $w = c \mathbf 1$ for some constant scalar $c \in \mathbb{R}.$ Therefore, $c \mathbf 1^\top (W_\sigma v)=(c\mathbf 1^\top W_\sigma)v = c(W_\sigma^\top \mathbf 1)^\top v = c\mathbf 1^\top v = 0.$ Thus, $W_\sigma$ maps vectors inside $\mathbf 1^\perp$ back into $\mathbf 1^\perp.$ Note that we already know that $\text{span}(\mathbf 1)$ is invariant with respect to $W_\sigma$ because the doubly stochastic property guarantees that $W_\sigma \mathbf 1 = \mathbf 1.$ 

By \eqref{eq:disagreement} the disagreement at $k+1$ satisfies
$$
D(k+1) = \frac{1}{\sqrt{N}} \norm{P \mathbf{X}_{k+1}}_F.
$$
Substituting \eqref{eq:consensus}, we see
$$
D(k+1) = \frac{1}{\sqrt{N}}\norm{PW_{\sigma_k}\mathbf{X}_k}_F.
$$
The matrix $P$ projects $\mathbf{X}_k$ onto the disagreement subspace $\mathbf 1^\perp.$ But because $\mathbf 1^\perp$ and $\text{span}(\mathbf 1
)$ are invariant with respect to $W_{\sigma_k}$, we can commute $P$ and $W_{\sigma_k}$:
$$
D(k+1) = \frac{1}{\sqrt{N}}\norm{W_{\sigma_k}P\mathbf{X}_k}_F.
$$
Now we can directly apply Def. \ref{def:contraction}, because $P\mathbf{X}_k \in \mathbf 1^\perp.$ 
$$
D(k+1) \leq \frac{1}{\sqrt{N}} \lambda_{\sigma_k} \norm{P \mathbf{X}_k}_F.
$$
So by \eqref{eq:disagreement} we obtain
\begin{equation} \label{eq:contracting}
D(k+1)
\le
\lambda_{\sigma_k}D(k),
\end{equation}
where one communication round modifies the distance to the consensus
manifold by at least the factor $\lambda_{\sigma_k}$. Note that $\lambda_\sigma$ shrinks as the disagreement reduction potency of mode $\sigma$ grows. 
\end{remark}

\begin{remark}[Beyond Consensus-Contracting Modes]
The assumption $\lambda_\sigma<1$ for every communication mode is
sufficient but not necessary for consensus. More generally, consensus
may still occur when some modes are non-contracting or even locally
expansive, provided that the overall switching sequence produces a net
reduction of disagreement. Such situations can be analyzed using
average-contraction or switched-system stability conditions. The
present work focuses on the simpler case in which each communication
mode is individually consensus contracting, enabling explicit
expressions for the consensus time and certified budget.
\end{remark}

A core principle of our framework is the incorporation of resource-management into the control objective. We begin by defining the budget and cost variables. 

\begin{definition}[Token Budget Dynamics]\label{def:budget}
Let
$
b_k\in\mathbb R_{\ge0}
$
denote the remaining token budget at communication round $k$.
Each communication mode
$\sigma\in\Sigma$
incurs a token cost
$c_\sigma>0$.
The budget evolves according to
\begin{equation}
b_{k+1}
=
b_k-c_{\sigma_k},
\qquad
b_0>0.
\label{eq:budget}
\end{equation}
\end{definition}

As illustrated in Figure 2, we optimize between speed and cost through norm-threshold switching.

\begin{definition}[Norm-Threshold Switching]\label{def:switch}
Let $\eta>0$ be a prescribed disagreement threshold.
The active communication mode is selected according to
\begin{equation}
\sigma_k=
\begin{cases}
1, & D(k)>\eta,\\[2mm]
2, & D(k)\le \eta.
\end{cases}
\label{eq:rule}
\end{equation}

Mode 2 is activated whenever the disagreement falls beneath the threshold $\eta$,
while Mode 1 is activated as long the disagreement exceeds it.
\end{definition}

The aggregate belief state $\mathbf{X}_k$ of the MAS does not include budget dynamics. This deficiency motivates an augmented state description of the MAS system. 

\begin{definition}[Augmented State]
The augmented state of the multi-agent system is

\begin{equation}
z_k=(\mathbf{X}_k,b_k)
\in
\mathbb R^{N\times d}\times\mathbb R_{\ge0}.
\end{equation}

The first component represents collective beliefs,
while the second component represents the remaining token budget.
\end{definition}

Now we mathematically formulate what it means for a MAS to be ``safe." The $\varepsilon$-Consensus Safe Set unifies all the previous aspects of the framework into a rigorous characterization of successful goal states. This definition also lays theoretical foundations for the discussion of boundary conditions and certificate regions that follows in the next section. 
\begin{definition}[$\varepsilon$-Consensus Safe Set]
\label{def:epssafe}

Let $\varepsilon>0$ denote an acceptable consensus tolerance.
The \emph{$\varepsilon$-consensus safe set} is
\begin{equation} \label{eq:omega_ep}
\Omega_\varepsilon
=
\left\{
(\mathbf{X},b)\in
\mathbb R^{N\times d}\times\mathbb R_{\ge0}
:
D(\mathbf{X})\le\varepsilon,
\;
b\ge0
\right\}.
\end{equation} 

A state $(\mathbf{X},b)\in\Omega_\varepsilon$ satisfies two properties:

\begin{enumerate}
\item The collective disagreement among agents is bounded by
      $\varepsilon$, that is, $D(\mathbf{X})\le\varepsilon$,
      so the agent network has reached practical consensus.
\item The remaining token budget is nonnegative, $b\ge0$,
      ensuring that no computational resource constraint has been violated.
\end{enumerate}

Consequently, \eqref{eq:omega_ep} represents the set of states for
which the collaborative reasoning task is considered successfully
completed while remaining within the available communication budget.
\end{definition}

\begin{remark}[Interpretation for Multi-Agent AI Systems] The parameter $\varepsilon$ designates a tolerance for the degree of agreement required
before the system terminates deliberation.
For example, in a network of LLM-based agents performing software
architecture planning, each row of $\mathbf{X}_k$ may encode an agent's
preferences regarding deployment strategy, database selection,
authentication mechanisms, scalability, cost, and observability.
The condition
$ D(\mathbf{X}_k)\le\varepsilon $
implies that all agents have converged to sufficiently similar
proposals, even if their belief vectors are not exactly identical.

Similarly, in scientific discovery, collaborative theorem proving,
multi-agent code generation, or autonomous planning, exact consensus
is rarely necessary. Instead, practical agreement within a tolerance
$\varepsilon$ is sufficient for selecting a final decision and
terminating further communication.
The budget constraint $b_k\ge0$ reflects finite computational resources,
such as token limits, API costs, inference credits, wall-clock
constraints, or energy consumption. Therefore,
$\Omega_\varepsilon$ characterizes successful completion of a
multi-agent reasoning task with both consensus and resource
feasibility guarantees.
\end{remark}

\section{Consensus and Budget Certificates}
We derive boundary conditions for a consensus-budget certificate region and prove its forward invariance. We assume for the sake of simplicity that there are two communication arrangements, as in Fig. \ref{fig:fig2}. We express these arrangements as two switching modes, $\sigma = 1$ and $\sigma = 2$. We assume that the consensus-contraction rates $\lambda_{1}, \lambda_2$ associated with $\mathcal{G}_{1}$ and $\mathcal{G}_{2}$ are both less than one.

By \eqref{eq:contracting}, our assumption that $\lambda_\sigma < 1$ implies that the aggregate disagreement is strictly decreasing under every active
communication mode. Because $\lambda_{\sigma_k}$ is a \textit{rate}, we can formulate exact expressions for the number of rounds in which we can reach consensus from an initial disagreement level $D_0$. Moreover, given the computational cost $c_\sigma$ of each communication topology $\sigma$ for a given round, we can predict the total cost. 

\begin{theorem}[Resource-Aware Consensus]
\label{thm:rac}

Assume $ D(0)>\eta>\varepsilon>0.$ Define
\begin{equation}
K_1
:=
\left\lceil
\frac{\log(\eta/D(0))}
{\log\lambda_1}
\right\rceil,
\qquad
K_2
:=
\left\lceil
\frac{\log(\varepsilon/\eta)}
{\log\lambda_2}
\right\rceil,
\label{eq:Ks}
\end{equation}
for consensus-contraction rates $0< \lambda_1, \lambda_2 < 1 $,
and let $K^* = K_1+K_2$.

Then:

\begin{enumerate}
\item
During the high-disagreement phase,
\[
D(k)
\le
\lambda_1^kD(0),
\qquad
0< k < K_1.
\]

\item
During the low-disagreement phase,
\[
D(k)
\le
\lambda_2^{\,k-K_1}\eta,
\qquad
k\geq K_1>0.
\]

\item
The trajectory reaches
$\varepsilon$-consensus in at most $K^*$ rounds:
\[
D(K^*)
\le
\varepsilon.
\]

\item
The corresponding communication cost is
\[
B^*
=
K_1c_1+K_2c_2,
\] where $c_1,c_2$ are the per-round token costs of modes 1 and 2 respectively.
\end{enumerate}

\end{theorem}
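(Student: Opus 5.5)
The plan is to introduce the first switching time
\[
\tau:=\min\{k\ge 0:\ D(k)\le\eta\}
\]
and to handle the two phases of the norm-threshold rule \eqref{eq:rule} separately. Everything rests on the one-step bound \eqref{eq:contracting}, $D(k+1)\le\lambda_{\sigma_k}D(k)$, together with the hypothesis $0<\lambda_1,\lambda_2<1$.

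\textbf{High-disagreement phase.} While $D(k)>\eta$, the rule \eqref{eq:rule} forces $\sigma_k=1$. Iterating \eqref{eq:contracting} by induction gives $D(k)\le\lambda_1^kD(0)$ for all $k\le\tau$, which is item 1 on the high-disagreement phase. Next I show $\tau\le K_1$. Since $\log\lambda_1<0$ and $\log(\eta/D(0))<0$, the definition \eqref{eq:Ks} yields $K_1\log\lambda_1\le\log(\eta/D(0))$, that is, $\lambda_1^{K_1}D(0)\le\eta$. So if the trajectory had stayed above $\eta$ through round $K_1$, the bound just proved would give $D(K_1)\le\eta$, a contradiction. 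Also $K_1\ge1$, because $D(0)>\eta$.

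\textbf{Low-disagreement phase.} Here I will show the region $\{D\le\eta\}$ is forward invariant. If $D(k)\le\eta$, then $\sigma_k=2$ and $D(k+1)\le\lambda_2D(k)\le\eta$. Hence $\sigma_k=2$ for every $k\ge\tau$, and iterating gives $D(k)\le\lambda_2^{k-\tau}\eta$. Because $\tau\le K_1$ and $\lambda_2<1$, we have $\lambda_2^{k-\tau}\le\lambda_2^{k-K_1}$ for $k\ge K_1$, which is item 2. Item 3 then follows by taking $k=K^*$: the definition of $K_2$ gives $\lambda_2^{K_2}\eta\le\varepsilon$, so $D(K^*)\le\lambda_2^{K_2}\eta\le\varepsilon$.

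\textbf{Cost.} Summing \eqref{eq:budget} over the nominal schedule (mode 1 for $K_1$ rounds, then mode 2 for $K_2$ rounds) gives $b_0-b_{K^*}=K_1c_1+K_2c_2=B^*$. When $\tau<K_1$, the realized schedule replaces $K_1-\tau$ mode-1 rounds by mode-2 rounds. Under the natural standing assumption $c_1\ge c_2$ (dense topologies cost more), $B^*$ is therefore an upper bound on the expenditure needed to reach $\Omega_\varepsilon$. I would state item 4 in that sense.

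\textbf{Main obstacle.} The delicate point is the literal form of item 1 on $0<k<K_1$ when the switch happens early ($\tau<K_1$). For $\tau\le k<K_1$ only $D(k)\le\lambda_2^{k-\tau}\lambda_1^\tau D(0)$ is available, and this is weaker than $\lambda_1^kD(0)$ if $\lambda_2>\lambda_1$. I would first try to resolve this by reading the ``high-disagreement phase'' as $k\le\tau$, which is exactly what the argument needs. Failing that, I would add the assumption $\lambda_1\ge\lambda_2$, or replace the bound by $\max(\lambda_1,\lambda_2)^kD(0)$. Neither change affects items 2--4.
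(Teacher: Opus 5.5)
Your argument is the paper's argument done more carefully, except at one step. The ``main obstacle'' you flag in item 1 is not real, so you should not weaken the statement or add the assumption $\lambda_1\ge\lambda_2$.

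Because $K_1$ is the \emph{least} integer $\ge \log(\eta/D(0))/\log\lambda_1$, every integer $k$ with $0\le k<K_1$ satisfies $k<\log(\eta/D(0))/\log\lambda_1$. Multiplying by $\log\lambda_1<0$ gives $\lambda_1^kD(0)>\eta$. So for $\tau\le k<K_1$ you can use the forward invariance of $\{D\le\eta\}$, which you prove in your very next paragraph: $D(k)\le\eta<\lambda_1^kD(0)$. Combined with $D(k)\le\lambda_1^kD(0)$ for $k\le\tau$, this gives item 1 on all of $0<k<K_1$, with no relation needed between $\lambda_1$ and $\lambda_2$. The product bound $\lambda_2^{k-\tau}\lambda_1^{\tau}D(0)$ is simply not the sharpest information you have on that range.

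With that fix the proposal is complete and follows the paper's route: iterate the one-step contraction, and use the ceilings to get $\lambda_1^{K_1}D(0)\le\eta$ and $\lambda_2^{K_2}\eta\le\varepsilon$. It is more rigorous than the paper's proof, which tacitly assumes mode 1 is active for exactly $K_1$ rounds. Your switching time $\tau\le K_1$, the invariance of $\{D\le\eta\}$, and the inequality $\lambda_2^{k-\tau}\le\lambda_2^{k-K_1}$ are what actually justify items 2 and 3 when the switch happens early.

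The same applies to item 4. The paper states it as an identity, which is exact only when $\tau=K_1$. Your reading of $B^*$ as an upper bound under $c_1\ge c_2$ is the right one. That assumption appears in Algorithm~\ref{alg:consensus} and Table~\ref{tab:weights} but not in the theorem. This upper-bound form is exactly what Theorem~\ref{thm:cert} needs when it asserts that the trajectory consumes at most $B^*$ tokens.
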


\begin{proof}
To derive the expression for $K_1$, observe that repeated application of \eqref{eq:contracting} yields $D(K_1) \leq \lambda_1^{K_1}D(0)$ for $K_1 > 0.$
The switching rule activates mode $1$ while
$D(k)>\eta$ and mode $2$ otherwise.
$K_1$ is an upper bound on the number of rounds until $D(k) \leq \eta$. To solve for $K_1$, set $\eta = \lambda_1^{K_1}D(0) \implies \lambda_1^{K_1} = \frac{\eta}{D(0)} \implies K_1 = \log_{\lambda_1}\Big(\frac{\eta}{D(0)} \Big)$. Changing base, $K_1 = \frac{\log (\eta / D(0))}{\log \lambda_1}$. Since we are counting discrete communication rounds, we must have $K_1 \in \mathbb{Z}_{\geq 0}$. We also want to guarantee $D(K_1) \leq \eta $. Thus, we apply the ceiling function $K_1
=
\left\lceil
\frac{\log(\eta/D(0))}
{\log\lambda_1}
\right\rceil.$
The expression for $K_2$ is derived similarly. Due to the construction of $K_1,K_2$, repeated application of \eqref{eq:contracting} yields the bounds in (1) and (2).  The definition of $K_1$ implies
$ \lambda_1^{K_1}D(0)\le\eta$, while the definition of $K_2$ implies $ \lambda_2^{K_2}\eta\le\varepsilon$.
Combining both estimates yields $D(K^*)\le\varepsilon$.

Finally, the total token expenditure equals the number of rounds
spent in each mode multiplied by the corresponding communication cost,
giving $B^*=K_1c_1+K_2c_2$.

\end{proof}

\begin{corollary}[Topology Trade-Off]
\label{cor:tradeoff}

Define
\[
K_{\mathrm{cmpl}}
=
\left\lceil
\frac{\log(\varepsilon/D(0))}
{\log\lambda_1}
\right\rceil,
K_{\mathrm{ring}}
=
\left\lceil
\frac{\log(\varepsilon/D(0))}
{\log\lambda_2}
\right\rceil.
\qquad
\]

Then if $\lambda_1 < \lambda_2,$
\[
K_{\mathrm{cmpl}}
\le
K^*
\le
K_{\mathrm{ring}}.
\]

Moreover, if
\[
\frac{c_1}{c_2}
\ge
\frac{\log\lambda_1}{\log\lambda_2},
\]
then
\[
K_{\mathrm{ring}}c_2
\le
B^*
\le
K_{\mathrm{cmpl}}c_1.
\]

Therefore, the adaptive communication policy achieves a compromise
between communication speed and token expenditure: it is faster than
a ring-only strategy and cheaper than a complete-graph strategy. 
\end{corollary}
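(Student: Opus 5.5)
The plan is to pass to logarithmic coordinates, where every quantity in the statement becomes a ratio of a "log-distance" to a "log-rate". Set $\ell_1 := -\log\lambda_1$ and $\ell_2 := -\log\lambda_2$. Since $\lambda_1<\lambda_2<1$, we have $\ell_1>\ell_2>0$. Split the total log-distance $L := \log(D(0)/\varepsilon)$ as $L=a+b$, with $a := \log(D(0)/\eta)>0$ and $b := \log(\eta/\varepsilon)>0$; positivity uses $D(0)>\eta>\varepsilon$ from Theorem~\ref{thm:rac}. In these coordinates,
\[
K_1=\lceil a/\ell_1\rceil,\quad K_2=\lceil b/\ell_2\rceil,\quad K_{\mathrm{cmpl}}=\lceil (a+b)/\ell_1\rceil,\quad K_{\mathrm{ring}}=\lceil (a+b)/\ell_2\rceil,
\]
and the hypothesis $c_1/c_2\ge \log\lambda_1/\log\lambda_2$ reads $c_1/\ell_1 \ge c_2/\ell_2$. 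That is, the cost per unit of log-contraction of mode 1 is at least that of mode 2.

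\textbf{Step 1 (round counts).} For the lower bound I will use subadditivity of the ceiling and $b/\ell_1\le b/\ell_2$:
\[
K_{\mathrm{cmpl}}\le \lceil a/\ell_1\rceil+\lceil b/\ell_1\rceil \le K_1+K_2=K^*.
\]
For the upper bound I will first prove the unrounded inequality $a/\ell_1+b/\ell_2\le a/\ell_2+b/\ell_2$, which is immediate from $\ell_1>\ell_2$.

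\textbf{Step 2 (costs).} I will again argue at the level of real-valued round counts. The upper bound $B^*\le K_{\mathrm{cmpl}}c_1$ reduces, after cancelling the common $a c_1/\ell_1$ term, to $(b/\ell_2)c_2\le (b/\ell_1)c_1$. The lower bound $K_{\mathrm{ring}}c_2\le B^*$ reduces, after cancelling $b c_2/\ell_2$, to $(a/\ell_2)c_2\le (a/\ell_1)c_1$. Both are exactly the hypothesis $c_1/\ell_1\ge c_2/\ell_2$ multiplied by $b>0$ or $a>0$. So the cost sandwich is a direct consequence of the assumed cost-per-contraction ordering.

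\textbf{Main obstacle: the ceilings.} The identity $\lceil x\rceil+\lceil y\rceil\le\lceil x+y\rceil$ fails in general; one only has $\lceil x\rceil+\lceil y\rceil\le\lceil x+y\rceil+1$. Concretely, if $a/\ell_1$ and $b/\ell_2$ are both small and non-integer, then $K^*=2$ can exceed $K_{\mathrm{ring}}=1$. The same rounding slack enters the cost comparisons.

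I will handle this in two steps. First, I will state and prove the corollary exactly for the continuous counts $\tilde K_1=a/\ell_1$, $\tilde K_2=b/\ell_2$, $\tilde K_{\mathrm{cmpl}}=L/\ell_1$ and $\tilde K_{\mathrm{ring}}=L/\ell_2$. Second, I will transfer the result to the integer counts. The bound $K_{\mathrm{cmpl}}\le K^*$ survives rounding, as shown in Step 1. For the remaining inequalities I will either
\begin{itemize}
\item assume the ratios $a/\ell_1$, $b/\ell_2$ and $L/\ell_2$ are integers, or
\item record the rounded versions with an explicit additive error: $K^*\le K_{\mathrm{ring}}+1$, and the cost bounds up to $\max(c_1,c_2)$.
\end{itemize}
I expect to present the continuous statement as the rigorous content, with the rounding correction noted in a remark.
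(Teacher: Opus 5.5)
\textbf{Your diagnosis is correct.} The paper states this corollary with no proof. It is followed immediately by Definition~\ref{def:cert}, and the only support offered is the numerical instance $2\le5\le8$, $800\le1000\le1200$. So there is no authors' argument to match, and the statement cannot be proved as written.

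With the ceilings in the definitions, three of the four inequalities fail in general:
\begin{itemize}
\item Take $\log\lambda_1=-2$, $\log\lambda_2=-1$, $\log(D(0)/\eta)=0.2$, $\log(\eta/\varepsilon)=0.1$, and $c_1=2c_2$, so the cost hypothesis holds with equality. Then $K_1=K_2=K_{\mathrm{ring}}=K_{\mathrm{cmpl}}=1$. Hence $K^*=2>K_{\mathrm{ring}}$, and $B^*=3c_2>2c_2=K_{\mathrm{cmpl}}c_1$.
\item Take $a/\ell_1=b/\ell_2=1$ with $\ell_1/\ell_2=c_1/c_2=3/2$. Then $K_{\mathrm{ring}}=3$, so $K_{\mathrm{ring}}c_2=3c_2>\tfrac52 c_2=B^*$.
\end{itemize}
The second example also shows that your extra integrality assumption on $L/\ell_2$ is genuinely needed, not only the one on $a/\ell_1$ and $b/\ell_2$. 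The following parts of your plan are correct:
\begin{itemize}
\item the continuous-count computations;
\item the rounded proof of $K_{\mathrm{cmpl}}\le K^*$;
\item the corrections $K^*\le K_{\mathrm{ring}}+1$ and $K_{\mathrm{ring}}c_2<B^*+c_2$.
\end{itemize}

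\textbf{The one claim that fails is the additive error $\max(c_1,c_2)$ for the upper cost bound $B^*\le K_{\mathrm{cmpl}}c_1$.} Rounding can waste almost a full round in \emph{each} phase while $K_{\mathrm{cmpl}}$ wastes nothing. Take $\log\lambda_1=-3/2$, $\log\lambda_2=-1$, $c_1=3$, $c_2=2$ (the hypothesis again holds with equality), $\log(D(0)/\eta)=1.53$, and $\log(\eta/\varepsilon)=1.44$. Then $K_1=K_2=K_{\mathrm{cmpl}}=2$, so
\[
B^*=10>9=K_{\mathrm{cmpl}}c_1+\max(c_1,c_2).
\]
Incidentally, this same instance gives $K^*=4>3=K_{\mathrm{ring}}$.

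What your argument actually yields is
\[
B^*<(a/\ell_1+1)c_1+(b/\ell_2+1)c_2\le (L/\ell_1)c_1+c_1+c_2\le K_{\mathrm{cmpl}}c_1+c_1+c_2 .
\]
So your rounding remark should state slack $c_1+c_2$ for the upper cost bound and $c_2$ for the lower one. With that change, your plan (continuous statement plus integer-count corrections) is the strongest correct version of the corollary. It also shows that the closing ``faster than ring-only, cheaper than complete-graph'' conclusion holds for the certified bounds only up to one round and $c_1+c_2$ tokens.
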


\begin{definition}[Consensus-Budget Certificate Region]
\label{def:cert}

Let
\[
B^*(D)
=
\left\lceil
\frac{\log(\eta/D)}
{\log\lambda_1}
\right\rceil c_1
+
\left\lceil
\frac{\log(\varepsilon/\eta)}
{\log\lambda_2}
\right\rceil c_2.
\]

The set
\[
\Omega_{\mathrm{cert}}
=
\left\{
(\mathbf{X},b)
\in
\mathbb R^{N\times d}\times\mathbb R_{\ge0}
:
b\ge B^*(D(\mathbf{X}))
\right\}
\]
is called the consensus-budget certificate region.
\end{definition}

\begin{theorem}[Consensus-Budget Certificate]
\label{thm:cert}

If
\[
(\mathbf{X}_0,b_0)\in\Omega_{\mathrm{cert}},
\]
then the corresponding trajectory reaches the
$\varepsilon$-consensus safe set $\Omega_\epsilon$ as in \eqref{eq:omega_ep} in at most $K^*$ communication rounds. Equivalently, $D(\mathbf{X}_{K^*})\le\varepsilon $ and $ b_{K^*}\ge0$.
\end{theorem}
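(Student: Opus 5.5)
The plan is to split the claim into its two conclusions. The disagreement bound $D(\mathbf{X}_{K^*})\le\varepsilon$ will be taken directly from Theorem~\ref{thm:rac}. The budget bound $b_{K^*}\ge 0$ will come from an explicit count of how many of the first $K^*$ rounds are spent in each mode under the switching rule \eqref{eq:rule}. Throughout I would work under the standing hypotheses of Theorem~\ref{thm:rac}, namely $D(0)>\eta>\varepsilon>0$ and $0<\lambda_1,\lambda_2<1$, and take $K_1,K_2,K^*$ as in \eqref{eq:Ks}. Membership $(\mathbf{X}_0,b_0)\in\Omega_{\mathrm{cert}}$ then reads $b_0\ge B^*(D(0))=K_1c_1+K_2c_2=B^*$.

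\textbf{Step 1 (consensus part).} By item (3) of Theorem~\ref{thm:rac}, $D(K^*)\le\varepsilon$, so the first condition defining $\Omega_\varepsilon$ in \eqref{eq:omega_ep} holds at round $K^*$.

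\textbf{Step 2 (structure of the switching sequence).} Since every mode is contracting, \eqref{eq:contracting} gives that $D(k)$ is nonincreasing. Let $m$ be the first index with $D(m)\le\eta$. Then:
\begin{itemize}
\item mode $1$ is active exactly on rounds $0,\dots,m-1$;
\item once $D(m)\le\eta$, monotonicity keeps $D(k)\le\eta$ for all later $k$, so mode $2$ is active on every round from $m$ onward (no chattering back to mode $1$);
\item by the construction of $K_1$ via the ceiling, $D(K_1)\le\lambda_1^{K_1}D(0)\le\eta$, hence $m\le K_1$.
\end{itemize}

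\textbf{Step 3 (budget accounting).} Iterating \eqref{eq:budget} over the first $K^*$ rounds gives
\[
b_{K^*}=b_0-\bigl(mc_1+(K^*-m)c_2\bigr)=b_0-B^*+(K_1-m)(c_1-c_2).
\]
Because $m\le K_1$, the correction term $(K_1-m)(c_1-c_2)$ is nonnegative exactly when $c_1\ge c_2$. That is the modelling premise that the dense mode is the costlier one (Fig.~\ref{fig:fig2}), and I would make it explicit. Combined with $b_0\ge B^*$, this yields $b_{K^*}\ge 0$. The same computation with $K^*$ replaced by any $k\le K^*$ shows $b_k\ge0$ along the whole trajectory, since the cumulative cost only grows with $k$.

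\textbf{Main obstacle.} The delicate point is Step 3. The certificate $B^*$ charges exactly $K_1$ rounds at cost $c_1$ and $K_2$ at cost $c_2$. The actual trajectory may instead leave mode $1$ early, at $m<K_1$, and then spend more than $K_2$ rounds in mode $2$ before round $K^*$. The budget bound therefore needs either $c_1\ge c_2$ or an alternative argument. The fallback I would use is to stop at the first hitting time $\tau$ of $D\le\varepsilon$. Since $D(m)\le\eta$ and $\lambda_2^{K_2}\eta\le\varepsilon$, we have $\tau\le m+K_2$, so the cost up to $\tau$ is at most $mc_1+K_2c_2\le B^*$ when $c_1>0$. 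This establishes that $\Omega_\varepsilon$ is reached within $K^*$ rounds without exhausting the budget, without any cost-ordering assumption.
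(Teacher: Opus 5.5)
Your proposal is correct, and it is more careful than the paper's own proof. The paper's proof is a two-line appeal to Theorem~\ref{thm:rac}(4): it asserts that the trajectory consumes at most $B^*(D(\mathbf{X}_0))=K_1c_1+K_2c_2$ tokens and writes $b_{K^*}=b_0-B^*\ge0$. That step tacitly assumes exactly $K_1$ rounds are spent in mode~1. This holds when contraction under $W_1$ is exact, as for the complete graph of Table~\ref{tab:weights}, where $W_1$ acts as $0.25\,I$ on $\mathbf 1^\perp$, but not in general.

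You instead identify the actual switching time $m\le K_1$, use monotonicity of $D$ to rule out switching back, and derive the identity $b_{K^*}=b_0-B^*+(K_1-m)(c_1-c_2)$. This exposes what the paper leaves implicit.
\begin{itemize}
\item The literal round-$K^*$ budget claim needs $c_1\ge c_2$. That is a modelling premise of Table~\ref{tab:weights} and Algorithm~\ref{alg:consensus}, not a hypothesis of the theorem.
\item Your hitting-time argument ($m\le\tau\le m+K_2\le K^*$, with cost at most $mc_1+K_2c_2\le B^*$) needs only $c_1,c_2>0$. It also matches the stopping rule of Algorithm~\ref{alg:consensus}.
\item Importing $D(0)>\eta>\varepsilon$ is necessary, not cosmetic. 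For $D(\mathbf{X}_0)\le\eta$ the first ceiling in $B^*(D)$ is nonpositive and can be negative, and the certificate can then fail.
\end{itemize}
The paper's route buys brevity; yours makes visible the hypotheses the result actually depends on.

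One small repair is needed in Step~2. The bound $D(K_1)\le\lambda_1^{K_1}D(0)$ need not hold if the switch occurs before round $K_1$, since later rounds contract only at rate $\lambda_2$. Argue $m\le K_1$ by contradiction instead: if $m>K_1$, mode~1 is active on rounds $0,\dots,K_1-1$. Then $D(K_1)\le\lambda_1^{K_1}D(0)\le\eta$, contradicting the minimality of $m$.
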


\begin{proof} Membership in $\Omega_{\mathrm{cert}}$ implies $b_0 \ge B^*(D(\mathbf{X}_0)).$ By Theorem~\ref{thm:rac}, the trajectory reaches $
D(\mathbf{X}_{K^*})\le\varepsilon $ after at most $K^*$ rounds and consumes at most $B^*(D(\mathbf{X}_0))$ tokens. Therefore $ b_{K^*}
= b_0-B^*(D(\mathbf{X}_0)) \ge0$, which yields $(\mathbf{X}_{K^*},b_{K^*})
\in
\Omega_\varepsilon.
$
\end{proof}

\begin{remark}[Operational Interpretation]

The boundary of the certificate region is given by $b=B^* (D(\mathbf{X})).$ States above this surface possess sufficient communication budget to guarantee $\varepsilon$-consensus, while states below it do not admit such a certification.

Consequently, before launching a multi-agent reasoning workflow,
an operator can compute the initial disagreement $D(\mathbf{X}_0)$ and verify 
$
b_0\ge B^*(D(\mathbf{X}_0)).
$

If the inequality holds, finite-time consensus is guaranteed before
the available token budget is exhausted.
\end{remark}

\begin{remark} [Calculating $\lambda_\sigma$]
In the numerical results section, we will calculate $\lambda_\sigma$ for weight matrices $W_\sigma$ which we design to be symmetric and circulant. Given these two properties, it is easy to calculate the consensus contraction rates. Because $W_\sigma$ is symmetric, the value for $\lambda_\sigma$ is the largest eigenvalue of $W_\sigma.$ Because $W_\sigma$ is circulant, we can calculate its eigenvalues by computing the Discrete Fourier Transform of the first row. 

For an $N \times N$ symmetric matrix $W$, its eigenvectors form an orthonormal basis for $\mathbb{R}^N.$ Let $\{v_1, v_2,...,v_N\}$ be the orthogonal basis of normalized eigenvectors, with corresponding eigenvalues $\{\lambda_1, \lambda_2,..., \lambda_N\}$ ordered such that $|\lambda_1| \geq |\lambda_2| \geq ...\geq|\lambda_N|.$ Take any vector $v \in \mathbf 1^\perp$. Since the disagreement space $\mathbf 1^\perp$ is a subspace of $\mathbb{R}^N$, $v = a_1v_1+a_2v_2+...a_nv_n$ for some $a_1,a_2,...,a_N \in \mathbb{R}.$ 

We defined $\lambda = \norm{W|_{1^\perp}}_2$. By the definition of the spectral norm, 
$$
\lambda = \max_{v \neq 0} \frac{\norm{Wv}_2}{\norm{v}_2} \quad \forall v \in \mathbf 1^\perp.
$$
Note that $Wv_i = \lambda_iv_i$ since $v_i$ is an eigenvector of $W.$ Then,
\begin{align*}
\norm{Wv}_2 & = \norm{W(a_1v_1+a_2v_2+\dotsi+a_Nv_N)}_2 \\
& = \norm{a_1 \lambda_1 v_1 + a_2\lambda_2v_2+\dotsi+a_n\lambda_Nv_N}_2 \\
& = \sqrt{\big(a_1 \lambda_1 v_1 + a_2\lambda_2v_2+\dotsi+a_n\lambda_Nv_N\big)^2} \\
& = \sqrt{a_1^2\lambda_1^2+\dotsi a_N^2\lambda_N^2} \\
&\leq \sqrt{\lambda_1^2(a_1^2+\dotsi + a_N^2)} \\
& = \lambda_1 \sqrt{(a_1^2 + \dotsi + a_N^2)} = \lambda_1 \norm{v}_2 \\
\implies & \norm{Wv}_2 \leq \lambda_1 \norm{v}_2.
\end{align*}
Thus $\lambda = \lambda_1.$ 

We apply the Discrete Fourier Transform to the first row of our circulant matrix $W$ to obtain the eigenvalues. Recall that the rows of a circulant matrix preserve the ordering of a sequence of elements $w_0,w_1,w_2,\dots,w_{N-1}$ while starting at a different element in each row. For an $N\times N$ circulant matrix W and an $N$-dimensional vector $x$, we define 
$$
y = Wx = \begin{bmatrix}
    w_0 & w_1 & w_2 & \dotsi & w_{N-1} \\
    w_{N-1} & w_0 & w_1 & w_2 & \dotsi \\
    \ddots & \ddots & \ddots & \ddots &\ddots \\
    w_1 & w_2 & \dots & w_{N-1} & w_0
\end{bmatrix} \begin{bmatrix}
    x_0 \\ x_1 \\ \vdots \\ x_{N-1}
\end{bmatrix}.
$$
Then we can compactly represent the $l$th element of the vector $y$ as $y_l = \sum_{j = 0}^{N-1}w_{j-l}x_j$, where we count $j - l \mod{n}$. For every circulant matrix, the $k$th eigenvector $x^{(k)}$ for $k = 0, 1, \dots, N-1$ is
$$
x^{(k)} = \begin{bmatrix}
    \omega_N^{0k} \\ \omega_N^{1k}\\ \omega_N^{2k} \\ \vdots \\ \omega_N^{(N-1)k}
\end{bmatrix}
$$
where $\omega_N = e^{{2\pi i}/N}$ is the primitive $N$th root of unity. If we combine these eigenvectors into a matrix, we get the matrix
$$
F = \begin{bmatrix}
    x^{(0)} & x^{(1)} & x^{(2)} & \cdots  & x^{(N-1)}
\end{bmatrix}
$$
where $F_{jk} = x_j^{(k)} = \omega_N^{jk}$. $Fx$ yields the Discrete Fourier Transform of $x.$ To see how we can use this to calculate our eigenvalues, consider the eigenvector $x^{(k)}$ and let $y^{(k)} = Wx^{(k)}$. Using the summation above, the $l$th component is 
$$y^{(k)}_l = \sum_{j = 0}^{N-1} w_{j-l}\omega_N^{jk} = \omega_N^{lk} \sum_{j = 0}^{N-1} w_{j-l}\omega_N^{(j-l)k}.$$
Since $\omega_N^{lk}$ is the $l$th component of $x^{(k)}$ and the summation is unchanged by starting location within $N-1$, 
$$
Wx^{(k)} = \lambda_k x^{(k)}
$$
where the eigenvalue $\lambda_k = \sum_{j = 0}^{N-1}w_j\omega_N^{jk}.$ Therefore, if the vector $\hat{w} = \begin{bmatrix}
    \lambda_0 & \lambda_1 & \lambda_2 & \dots  & \lambda_{N-1}
\end{bmatrix}$
lists all the eigenvalues of $W$, then 
$$\hat{w} = Fw$$ where $w$ is the first row of $W.$
\end{remark}

\section{Numerical Results}
The next two sections demonstrate how our control-theoretic framework provides mathematical handles for consensus-driven LLM-MAS. First we will illustrate the dynamics of our switched-system through simulations where the weighted-average update $\mathbf{X}_{k+1}=W_{\sigma_k}\mathbf{X}_k$ in \eqref{eq:consensus} is the
algebraic surrogate for LLM debate. Then, we will present a live LangGraph deployment with major AI platforms to show that the framework accurately captures the behavior of real MAS, agnostic to LLM choice.  

We use the same MAS configuration across simulations and LangGraph validations. We commission 5 agents to reach consensus on a plan for a cloud software architecture. Each agent assumes a distinct professional role whose proposals reflect specialist priorities, listed in Table \ref{tab:roles}. The biases of each role will not effect the simulations, but will be crucial later in determining the behavior of the LLMs.
\begin{table}[h]
\centering
\caption{Agent Roles}
\label{tab:roles}
\begin{tabular}{@{}lll@{}}
\toprule
Agent & Role & Dominant bias \\
\midrule
$a_1$ & Planner       & Balanced; timeline feasibility \\
$a_2$ & Architect     & Microservices, containers, scalability \\
$a_3$ & Sec.\ Reviewer & Authentication, compliance, tracing \\
$a_4$ & Cost Optimizer & Simpler stack, NoSQL, lower cost \\
$a_5$ & DevOps Eng.   & CI/CD, observability, K8s \\
\bottomrule
\end{tabular}
\end{table}
\noindent
Each agent holds a \emph{design-preference vector}
$x_i\in[0,1]^6$ encoding its position on six architectural
axes: (1)~service coupling, (2)~API style, (3)~database strategy,
(4)~deployment, (5)~authentication, (6)~observability. The design preference vector $x_i$ represents the belief state of agent $a_i$.
The state matrix $\mathbf{X}_k\in\mathbb{R}^{5\times6}$ stacks all agent vectors
row-wise.

The agents reach consensus through debating in one of two communication modes, where each mode is encoded by a weight matrix $W_\sigma$. The two modes, summarized in Table \ref{tab:weights}, correspond to the topologies in
Fig.~\ref{fig:fig2}. 
\begin{table}[h]
\centering
\caption{Weight Matrices}
\label{tab:weights}
\begin{tabular}{@{}lllc@{}}
\toprule
Mode & Graph & $W_\sigma$ & $c_\sigma$ \\
\midrule
$\sigma\!=\!1$ & Complete $K_5$ &
  $w_{ii}\!=\!0.4$,\;$w_{ij}\!=\!0.15$ ($i\!\ne\!j$) & $c_1 > c_2$ \\
$\sigma\!=\!2$ & Ring $C_5$ &
  $w_{ii}\!=\!0.6$,\;$w_{ij}\!=\!0.2$ ($j = i \pm 1$) & $c_2 < c_1$\\
\bottomrule
\end{tabular}
\end{table}
\noindent
We chose the values $w_{ij}$ semi-arbitrarily. Our framework assumes that the weight matrices are doubly stochastic. Furthermore, we gave the highest weight to the agent's own opinion to preserve the agent's original goals while compromising with the team. This is why $w_{ii} > w_{ij}$. We also designed $W_\sigma$ to be circulant and symmetric, so that we could easily calculate the consensus contraction rates $
\lambda_\sigma$. Apart from these two principles, the parameter values were chosen for arithmetic convenience. 
We can calculate the consensus-contraction rates for each weight matrix: 
$\lambda_2\approx0.724$ and $\lambda_1=0.25$.
Token costs $c_\sigma$ reflect the lower cost of ring systems exchanging compact per-neighbor
messages vs. the higher cost of complete-graph systems broadcasting full proposals to all peers.

\begin{algorithm}[H]
\caption{Norm-Threshold Adaptive Consensus}\label{alg:consensus}
\begin{algorithmic}
\STATE \textbf{Require:} $\mathbf{X}_0$, $b_0$, $\eta$, $\varepsilon$, $W_1,W_2$, $c_2<c_1$
\STATE \textbf{Ensure:} $\mathbf{X}_{K^*}$ (consensus) or \textsc{BudgetFail}
\STATE \hspace{0.5cm}$k\gets0$
\STATE \hspace{0.5cm}\textbf{while} {$D(k)>\varepsilon$} \textbf{do}
  \STATE \hspace{1cm}$\sigma_k\gets1$ if $D(k)>\eta$, else $\sigma_k\gets2$
  \STATE\hspace{1cm}\textbf{if} {$b_k<c_{\sigma_k}$} \textbf{then return} \textsc{BudgetFail}
  \STATE \hspace{1cm}\textbf{end if}
  \STATE \hspace{1cm}$\mathbf{X}_{k+1}\gets W_{\sigma_k}\mathbf{X}_k$;\;
         $b_{k+1}\gets b_k-c_{\sigma_k}$;\;
         $k\gets k+1$
\STATE \hspace{0.5cm}\textbf{end while}
\STATE \hspace{0.5cm}\textbf{return} $\mathbf{X}_k$
\end{algorithmic}
\end{algorithm}
Algorithm~\ref{alg:consensus} implements the switching policy. Communication rounds continue until $D(k) \leq \epsilon $ or the token budget $b_k $ is less than the cost of the next communication round $c_{\sigma_k}.$ In the LangGraph deployments, cost $c_{\sigma_k}$ was dynamic and impossible to predict before the communication round, and so we returned \textsc{BudgetFail} once $b_k < 0.$

\subsection{Illustrative Example of Algorithm~\ref{alg:consensus}}
For each agent $a_i$, we sample every entry in its initial belief state $x_0$ from a normal distribution centered at 0.5 with a standard deviation of 0.15, truncated at the endpoints 0 and 1. Each entry in $x_0$ denotes the agent's original position on one of the six architectural features. The MAS belief state $\mathbf{X}_0$ is shown in Table \ref{tab:init_belief}.
\begin{table}[h]
\caption{Initial belief state $\mathbf{X}_0$}
\label{tab:init_belief}
\begin{center}
\begin{tabular}{c c c c c c c} 
\toprule
 Agent / Feature & (1) & (2) & (3) & (4) & (5) & (6) \\  
\midrule
 $a_1$ & 0.57 & 0.54 & 0.57 & 0.13 & 0.55 & 0.52 \\ 
 $a_2$ & 0.39 & 0.22 & 0.57 & 0.09 & 0.74 & 0.57 \\
 $a_3$ & 0.56 & 0.31 & 0.46 & 0.58 & 0.48 & 0.75 \\
 $a_4$ & 0.79 & 0.44 & 0.40 & 0.28 & 0.64 & 0.56 \\
 $a_5$ & 0.33 & 0.63 & 0.37 & 0.49 & 0.53 & 0.66 \\ 
\bottomrule
\end{tabular}
\end{center}
\end{table}
The initial disagreement among the agents is $D(0)\approx0.30$. We set the switching threshold $\eta=0.1$, and the consensus tolerance 
$\varepsilon=0.03$, with an initial token budget $b_0=2000$. Because these simulations do not expend tokens, we set constant token costs based on the number of edges in a given communication graph: $c_2\!=\!100$ ($5\!\times\!2\!\times\!10$~tok) vs.  $c_1\!=\!600$  
($5\!\times\!4\!\times\!30$~tok), giving $c_1/c_2=6$.
Theorem~\ref{thm:rac} predicts $K_2=1$, $K_1=4$, $K^*=5$,
$B^*=1000$ as upper bounds.
The corollary condition $c_1/c_2=6\ge\log\lambda_1/\log\lambda_2\approx4.29$
holds, so Corollary~\ref{cor:tradeoff} applies.
Algorithm~\ref{alg:consensus} selects the topology at each round,
updates beliefs via \eqref{eq:consensus}, and decrements
the budget by $c_\sigma$.
The complete execution is traced in Table \ref{tab:exec}. 
\begin{table}[h]
\centering
\caption{Execution of Algorithm \ref{alg:consensus}}
\label{tab:exec}
\setlength{\tabcolsep}{3.5pt}
\small
\centering
\begin{tabular}{@{}crrrrp{3.0cm}@{}}
\toprule
$k$ & $D(k)$ & $\sigma_k$ & $c_{\sigma_k}$ & $b_k$ & Decision \\
\midrule
0 & 0.3009 & 2 & 600 & 2000
  & $D\!>\!\eta$: \emph{complete} $K_5$ \\[1pt]
1 & 0.0752 & 1 & 100 & 1400
  & $D\!\le\!\eta$: \emph{ring} $C_5$ \\[1pt]
2 & 0.0381 & 1 & 100 & 1300
  & $D\!>\!\varepsilon$: ring continues \\[1pt]
3 & \textbf{0.0254} & --- & --- & \textbf{1200}
  & $D\!\leq\varepsilon$: \textbf{consensus} \\
\bottomrule
\end{tabular}
\end{table}

\begin{figure*}[h]
    \centering
    \begin{subfigure} [t] {0.49\textwidth}
     \caption{}
        \centering
        \includegraphics[width=1.0\linewidth, trim=0.3cm 0.2cm 1.3cm 1.4cm, clip]{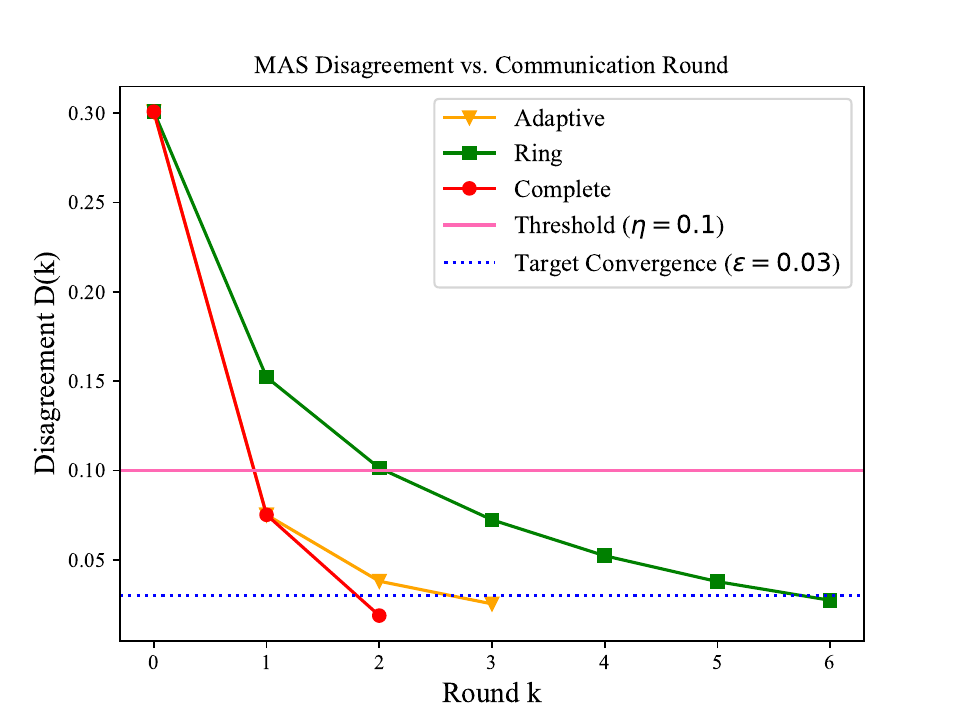}
         \label{fig:fig3}
    \end{subfigure}
     \begin{subfigure} [t] {0.49\textwidth}
      \caption{}
        \centering
         \includegraphics[width=1.0\linewidth, trim=0.3cm 0.2cm 1.3cm 1.4cm, clip]{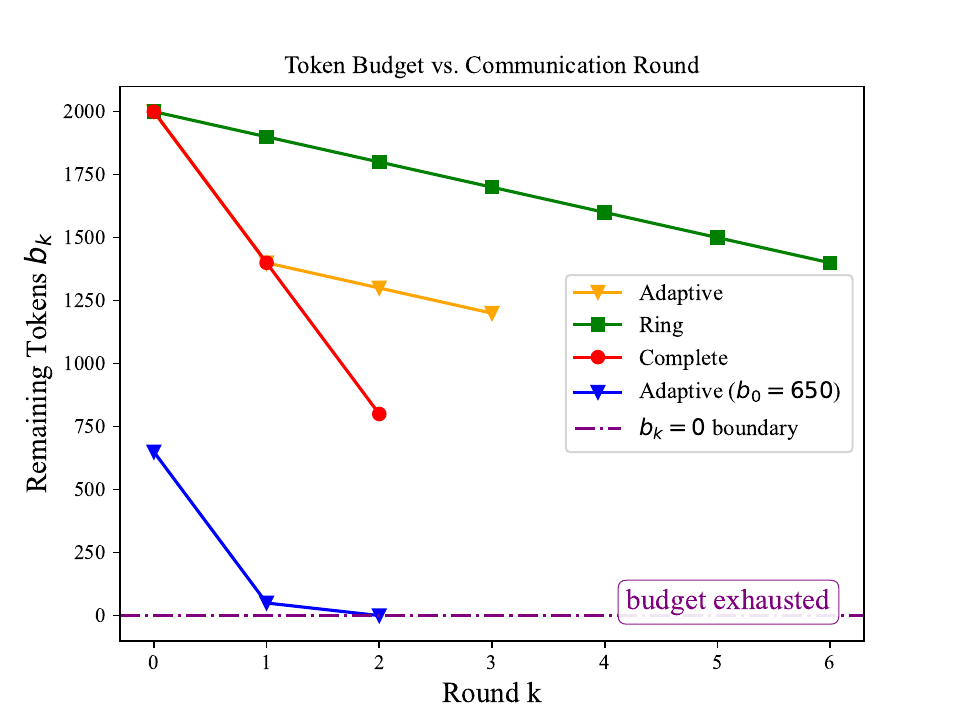}
       \label{fig:fig4}
    \end{subfigure}
    \caption{Panel (a) presents the disagreement dynamics across the three well-funded strategies. Panel (b) compares the budget dynamics between the well-funded strategies and the tight-budget run.}
    \label{fig:main_three_plots}
\end{figure*}

Theorem~\ref{thm:rac} yields theoretical verification of the execution. The spectral bound of Theorem~\ref{thm:rac} gives:
$D(1)\le\lambda_1 D(0)=0.25\times0.3009=0.0752$
(achieved exactly, since $W_2$ has eigenvalue $\lambda_1\!=\!0.25$ with
multiplicity $N\!-\!1$ on $\mathbf{1}^\perp$);
$D(3)\le\lambda_1\lambda_2^2 D(0)=0.25\times0.724^2\times0.3009=0.0394$
(actual $0.0254<0.0394$, faster due to energy in the faster eigenspace
$\lambda\!=\!0.276$ of $W_1$).
Since the final budget $b_3\!=\!1200\ge0$ and $D(3) \leq \epsilon,$ we have $(\mathbf{X}_3,b_3)\in\Omega_\varepsilon$. Since $B^*\!=\!1000\le b_0\!=\!2000$, the success of this execution confirms the guarantees given by Theorem~\ref{thm:cert}.

\subsection{Illustrative Example of Budget-Failure Run}
We consider the MAS path to consensus under three communication strategies.
\emph{Strategy~1 (adaptive)}: norm-threshold switching, $b_0=2000$. This strategy follows the switching policy given in the framework, where agents communicate globally under Mode 1 while $D(k) > \eta,$ and then communicate with their immediate neighbors under Mode 2. We expect this strategy to optimize between speed and cost.
\emph{Strategy~2 (complete-only)}: Mode~2 throughout, $b_0=2000$. Agents communicate globally at every round, reaching consensus faster but spending more tokens. 
\emph{Strategy~3 (ring-only)}: Mode~1 throughout, $b_0=2000$. Agents only communicate with their two neighbors, saving tokens but increasing the number of rounds required for consensus. All three strategies begin with an initial budget of 2000 tokens.

\begin{table}[h]
\centering
\caption{Execution summary. Strategies~1--3 reach $\Omega_\varepsilon$ ($b_\text{final}\ge0$).
Tight-budget run exhausts budget before $\varepsilon$-consensus,
confirming Theorem~\ref{thm:rac}(iv).}
\label{tab:summary}
\setlength{\tabcolsep}{4pt}
\begin{tabular}{lcccc}
\toprule
Strategy & Rounds & Tokens & $D_\text{final}$ & In $\Omega_\varepsilon$? \\
\midrule
1: Adaptive      & 3 & 800  & 0.0254 & \checkmark \\
2: Complete-only & 2 & 1200 & 0.0188 & \checkmark \\
3: Ring-only     & 6 & 600  & 0.0274 & \checkmark \\
Tight ($b_0\!=\!650$) & \textemdash & 600 & 0.0752 & $\times$ \\
\bottomrule
\end{tabular}
\end{table}

We also show how the MAS exhibits unsafe behavior when the initial budget falls outside of the bounds given by Definition~\ref{def:cert}.
\emph{Tight-budget run}: adaptive with $b_0=650<B^*=1000$. Because the initial token budget is less than the $B^*$ given by Theorem \ref{thm:rac}, we expect this strategy to trigger \textsc{Budget Fail}. We summarize the results in Table \ref{tab:summary}.
We ran this simulation with the same initial belief vector $\mathbf{X}_0$ given in Table \ref{tab:init_belief}, so $D(0), B^*$, etc. 
We plot the disagreement trajectories in Fig. \ref{fig:fig3}.
Complete (Strategy~2) converges fastest: $D(k)=0.25^k\cdot0.3009$
reaches $0.0188\le\varepsilon$ in 2~rounds. We would expect this because the matrix $W_{complete}$ averages the belief states of all the agents in the system at each communication round. The matrix $W_{ring}$ only averages adjoining belief vectors, so ring (Strategy~3) requires 6~rounds.
Adaptive (Strategy~1) exhibits the characteristic two-slope
trajectory: one steep drop under Mode~1 ($D$ falls from $0.3009$ to
$0.0752$ at round~1), then three gentler Mode-2 steps to $D=0.0254$---3~rounds
total, beating the worst-case bound of $K^*=5$.

We present the budget dynamics for the three well-funded strategies and the tight-budget run in Fig.~\ref{fig:fig4}. Strategies~1--3 remain strictly within the safe region ($b_k\ge0$), confirming forward invariance (Theorem~\ref{thm:cert}).
The kink in the adaptive curve at $k=1$ marks the Mode~$1 \to 2$
switch, where the per-round cost drops from 600 to 100~tokens.
The tight-budget curve exits $\Omega_\varepsilon$ during the Mode-2 step:
$b_0=650>c_1=600$ succeeds at $k=0$ ($b_1=50$) but then
$b_1=50<c_2=100$, triggering \textsc{BudgetFail} at $k=2$.

This simulation illustrates the chain in Corollary \ref{cor:tradeoff}:
$K_\text{cmpl}=2\le K^*=5\le K_\text{ring}=8$ and
$B_\text{ring}=800\le B^*=1000\le B_\text{cmpl}=1200$. The data in Table \ref{tab:summary} fits the bounds from the Corollary.
Adaptive saves 3~rounds vs.\ ring-only (at a cost of 200~extra tokens)
and saves 400~tokens vs.\ complete-only (at a cost of 1~extra round).
This Pareto efficiency is the core practical message: when both
latency and token cost carry weight, the adaptive policy dominates both
single-topology alternatives.

\subsection{AI Platform Implementation}
We deploy three popular LLM models, GPT-5.4-Mini, Gemini-2.5-Flash, and Claude Opus-4.8, on a LangGraph implementation of our framework. We used the benchmarks in Table \ref{tab:model_comparison} as a guideline for selecting these models. Due to the high token costs and longer compute times of GPT-5.5 and Gemini 3.1-Pro, we selected GPT-5.4-Mini and Gemini 2.5-Flash as capable representatives of their respective model families.

LangGraph is a low-level orchestration framework for building MAS workflows. In our scenario, the LangGraph architecture is the harness for the LLM agents. 
LangGraph represents a multi-agent workflow as a
\emph{StateGraph}: nodes transform a shared state object, and
conditional edges route execution based on state values.
The shared \texttt{ArchState} TypedDict encodes the complete
system state $(\mathbf{X}_k,b_k,D(k),\sigma_k)$ of
Definitions~\ref{def:agent}--\ref{def:switch}.
Five specialized agents---Planner, Architect, Security Reviewer,
Cost Optimizer, and DevOps Engineer---are LangGraph nodes that
read their neighbors' current proposals and output an updated
$d$-dimensional design vector as a text string. 
The key insight is that the weighted-average aggregation step
in \eqref{eq:consensus} can be applied by any LLM that
implements a doubly stochastic blending of its neighbors'
proposals, so all theoretical guarantees of
Theorems~\ref{thm:rac}--\ref{thm:cert} hold independently of
which specific model is deployed at each node.



Table~\ref{tab:mapping} summarizes the theory-to-code correspondence.
When live LLM agents are substituted for the matrix-multiply step \eqref{eq:consensus},
the LangGraph implementation inherits all guarantees of
Theorems~\ref{thm:rac}--\ref{thm:cert}:
forward invariance of $\Omega_\varepsilon$ is enforced by the
\texttt{should\_continue} guard; consensus time $K^*$ and budget
sufficiency $B^*$ are certified offline from $\lambda_\sigma$,
$c_\sigma$, and $D(0)$ before any inference call is made. It is important to clarify that we did not calculate $K^*, B^*$ in our deployments, because the parameters $\lambda_1,\lambda_2, c_1, c_2$ depend on hidden design features of the LLM. 

\begin{table}[h]
\centering
\caption{Theory-to-LangGraph correspondence.}
\label{tab:mapping}
\setlength{\tabcolsep}{4pt}
\footnotesize
\begin{tabular}{ll}
\toprule
Mathematical object & LangGraph component \\
\midrule
$\mathbf{X}_k\in\mathbb{R}^{N\times d}$       & \texttt{proposals} in \texttt{ArchState} \\
$b_k$                          & \texttt{budget} in \texttt{ArchState} \\
$D(k)=\|P\mathbf{X}_k\|_F/\!\sqrt{N}$& \texttt{D} in \texttt{ArchState} \\
$\sigma_k$ (Def.~\ref{def:switch})& \texttt{router} conditional edges \\
$\mathbf{X}_{k+1}=W_\sigma\mathbf{X}_k$      & \texttt{ring\_round} / \texttt{complete\_round} \\
$b_{k+1}=b_k-c_{\sigma_k}$    & \texttt{budget\_update} node \\
Thm.~\ref{thm:cert}         & \texttt{should\_continue} guard \\
$K^*$, $B^*$ (Thm.~\ref{thm:rac})& offline certificate before compile \\
\bottomrule
\end{tabular}
\end{table}

Through a text prompt, we assign the MAS a project where the agents must plan to launch an emergency medical tracking app. To increase initial disagreement and combat sycophancy, we exaggerate the urgency of the task, specifying a tiny infrastructure budget, top-secret data with maximum security, a two week launch period, and millions of requests. Then we assign roles to each of the agents through text prompts, giving each agent a specific priority (e.g. ``You are the Project Planner. You demand compromises in scalability to meet the deadline..." etc.). To preserve our assumption that each switching mode is consensus-contracting, we prompt the agents to adjust their preferences towards consensus at every round, without compromising their core values. 

At each communication round $k$, the agents communicate according to the topology activated under switching signal $\sigma_k.$ If agent $a_i$ communicates with agent $a_j$, we include the response which agent $a_i$ generated at round $k-1$ in the prompt of agent $a_j$ at round $k$. Each response is a text compilation of (i) the design-preference vector $x_i \in [0,1]^6 $ of agent $a_i$, (ii) the reasoning string which agent $a_i$ generated to justify its decision, and (iii) the influence weight which weight matrix $W_\sigma$ assigns to agent $a_i$. 

The influence weight of agent $a_i$ on agent $a_j$ is the entry $w_{ij}$ in the weight matrix $W_\sigma$. The weight matrices for Modes 1 and 2 are specified in Table \ref{tab:weights}. In a complete graph topology $K_5$, every agent communicates with every other agent. In a ring graph topology $C_5$, agent $a_i$ communicates with agent $a_{i-1}$ and agent $a_{i+1}.$ The relevant entries in each weight matrix, as listed in Table \ref{tab:weights}, were included in the prompt as a guideline for how much agent $a_j$ should incorporate $a_i$'s feedback. Agent $a_j$ updated its preference vector $x_j$ with no algebraic manipulation from the harness. Hence, the influence weights served as instructions for the agents rather than as means of direct modification.

We calculate disagreement $D(k)$ algebraically from the MAS aggregate belief matrix $\mathbf{X}_k$ using Definition \ref{def:disagreement}. To track token expenditure, we extract the number of input-tokens from the metadata of each agent's response. The ``input" of a response is the aggregate prompt, composed of the reasoning strings, design-preference vectors, and influence weights which agent $a_j$ receives from every agent $a_i$ that communicates with it. Hence, input-tokens accurately measure communication cost. We calculate $b_k$ at each round by subtracting the tokens expended at round $k$ from $b_{k-1}.$ 

We set the communication switching threshold $\eta = 0.2$ and the consensus tolerance $\varepsilon = 0.05.$ The initial token budget is $b_0 = $ 100,000. We track MAS disagreement and token expenditure across three strategies (i) adaptive, switching between Mode 1 and Mode 2 when $D(k) \leq 0.2$, (ii) complete-only (Mode 1), and (iii) ring-only (Mode 2). Except for differences in the parameters $\varepsilon, \eta$, these strategies are the exact same as the strategies from the simulations.

\begin{table*}[ht]
    \centering
     \caption{Execution summary across all models and strategies. We list the total number of communication rounds and total token expenditure from the initial disagreement $D(0)$ to the final disagreement $D_{final}.$}
    \label{tab:all_summary}
    \renewcommand{\arraystretch}{1.2} 
    \begin{tabular}{lccccccccc}
        \toprule
        & \multicolumn{3}{c}{\textbf{GPT-5.4-Mini}} & \multicolumn{3}{c}{\textbf{Gemini 2.5-Flash}} & \multicolumn{3}{c}{\textbf{Claude Opus-4.8}} \\ 
        \toprule
        & Adaptive & Complete & Ring & Adaptive & Complete & Ring & Adaptive & Complete & Ring \\ \bottomrule
        \textbf{Rounds} & 9 & 12 & 17 & 14 & 9 & 18 & 11 & 9 & 13 \\ 
        \textbf{Tokens} & 41,727 & 67,593 & 60,858 & 92,129 & 94,984 & 88,141 & 84,944 & 108,227 & 81,364 \\ 
        $\mathbf{D(0)}$ & 0.7778 & 0.8019 & 0.7570 & 0.7361 & 0.7158 & 0.7430 & 0.7240 & 0.6864 & 0.7193 \\ 
        $\mathbf{D_{final}}$ & 0.0409 & 0.0450 & 0.0285 & 0.0403 & 0.0430 & 0.0419 & 0.0456 & 0.0549 & 0.0401 \\ \bottomrule
    \end{tabular}
\end{table*}

\begin{figure*}[ht]
    \centering
    \begin{subfigure} [t] {0.49\textwidth}
        \caption{}
        \centering
        \includegraphics[width=1.0\linewidth, trim=0.8cm 0cm 1.8cm 1.7cm, clip]{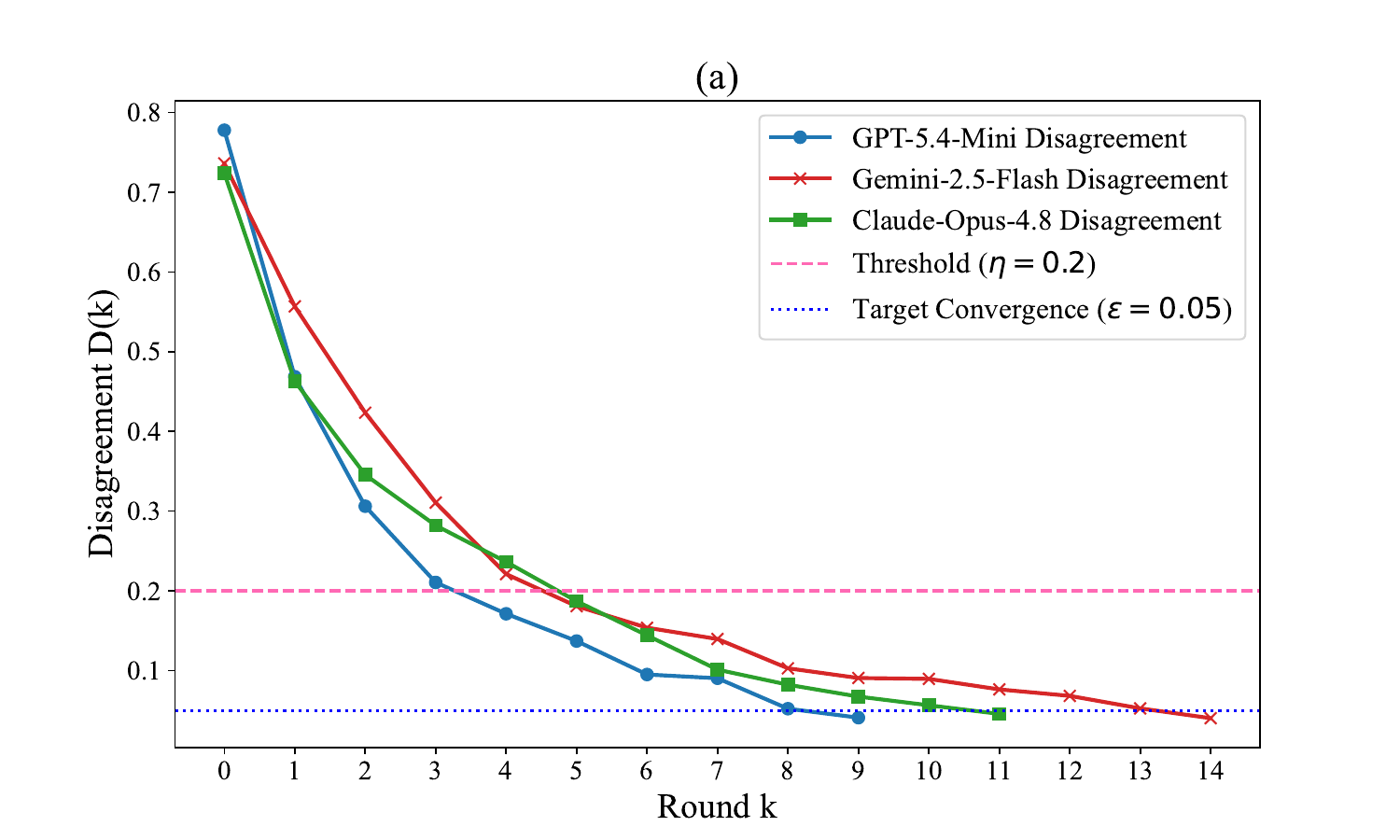}
        \label{fig:fig6}
    \end{subfigure}
        \hspace{0.1cm}
     \begin{subfigure} [t] {0.49\textwidth}
     \caption{}
        \centering
         \includegraphics[width=1.0\linewidth, trim=0.8cm 0cm 1.8cm 1.7cm, clip]{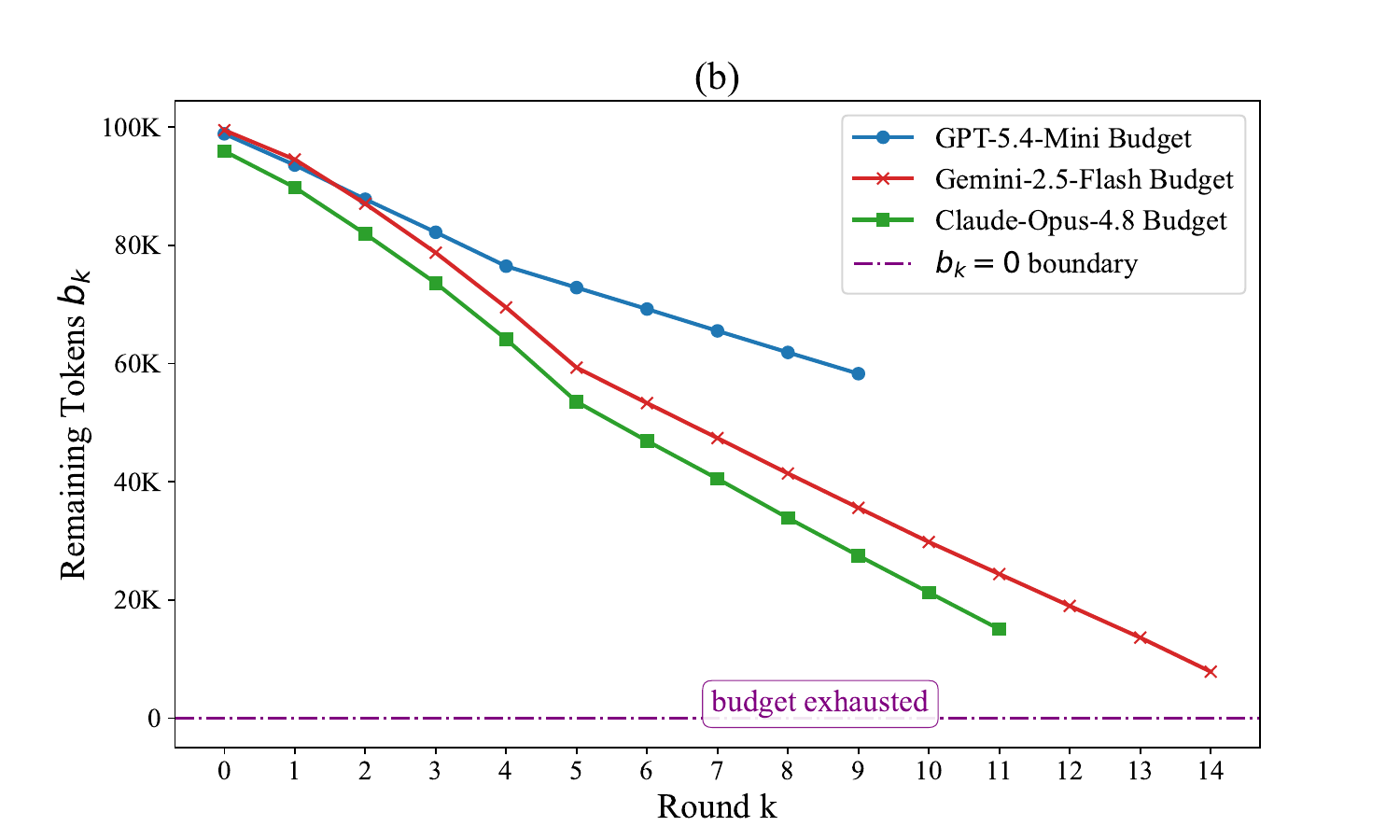}
       \label{fig:fig7}
    \end{subfigure}
     \begin{subfigure} [t] {0.49\textwidth}
     \caption{}
        \centering
        \includegraphics[width=1.0\linewidth, trim=0.8cm 0cm 1.8cm 1.7cm, clip]{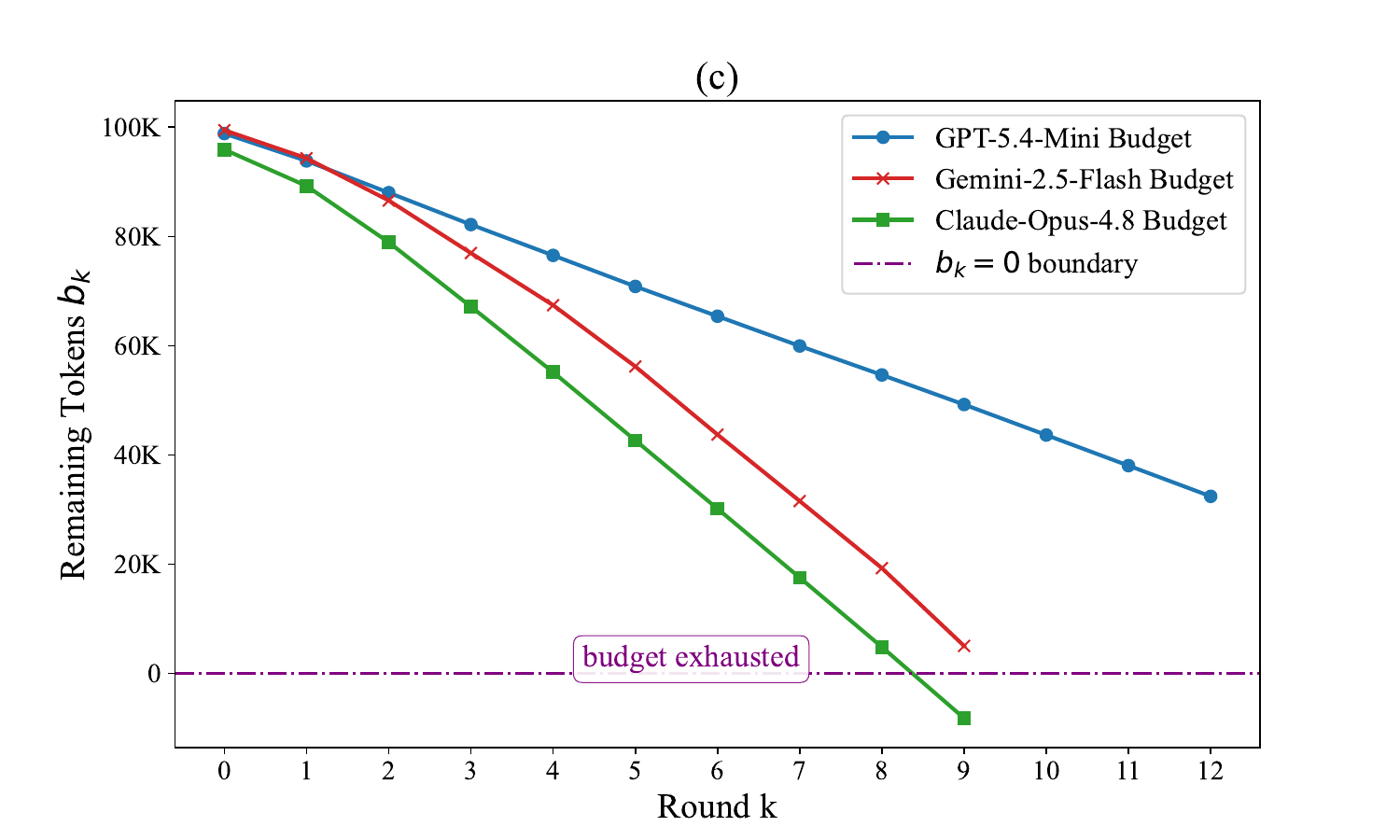}
        \label{fig:fig8}
    \end{subfigure}
        \hspace{0.1cm}
     \begin{subfigure} [t] {0.49\textwidth}
     \caption{}
        \centering
        \includegraphics[width=1.0\linewidth, trim=0.8cm 0cm 1.8cm 1.7cm, clip]{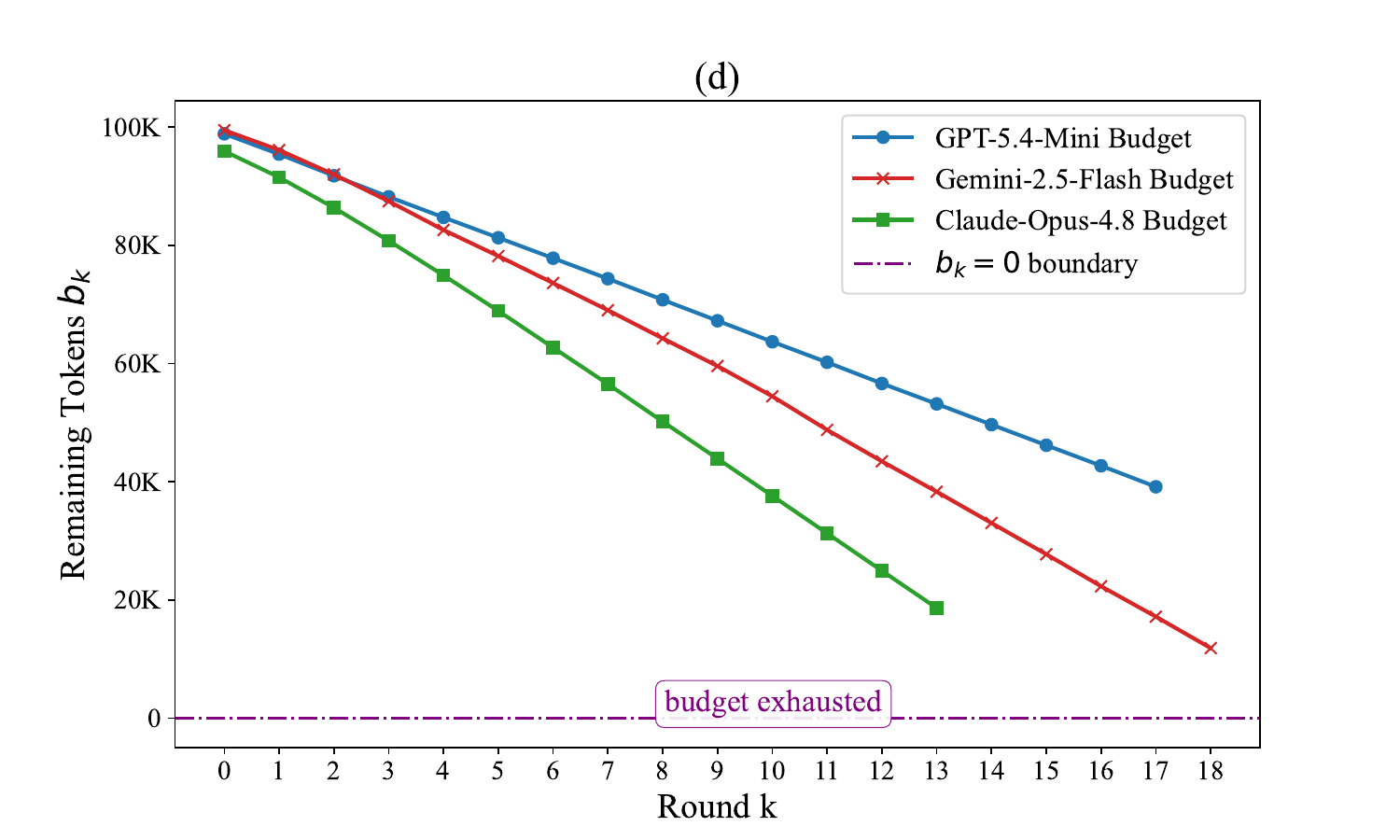}
            \label{fig:fig9}
    \end{subfigure}
    \caption{Analysis of budget and disagreement dynamics across different architectural strategies. Panels (a) and (b) examine the adaptive strategy, while (c) and (d) examine token costs for the complete-only and ring-only strategies.}
    \label{fig:main_four_plots}
\end{figure*}

 We summarize the executions across strategies and choice of LLM in Table \ref{tab:all_summary}. In the adaptive and ring strategies, the MAS debate sequences reflect our assumption that each communication mode is consensus-contracting, as $D(k)$ decreases with every round. It is worthwhile to mention that, during tests where we did not explicitly instruct the agents to compromise on each round, they would often increase $D(k)$, leading to significantly more rounds, or even budget failures. 

We noticed that the faster GPT-5.4-Mini model used less tokens than the Gemini 2.5-Flash and the Claude Opus-4.8. For the Gemini deployments, we scaled all token expenditures by a constant 0.75 for both the complete and ring modes, so that the Gemini deployment could fit within the same 100,000 token budget as the GPT deployment. The method for counting tokens is unique to the LLM, and we are not interested in comparing expenses across models. Instead, we are identifying behavioral traits of MAS that extend across models, and so a constant scaling merely makes these traits easier to see. 

Anthropic's Claude Opus-4.8 consumed drastically more tokens than Gemini and GPT, so we scaled all token costs by 0.45. This scaling allowed us to plot the token expenditures from all three deployments on the same graph. Note that we applied the same constant scalings across deployments under the adaptive, complete-only, and ring-only strategies. 

Fig. \ref{fig:fig6} plots the disagreement level for all three models under the adaptive strategy. Disagreement levels decline in two distinct stages under the adaptive strategy. When Mode 1 is activated ($D(k)> \eta$), global communication brings the MAS toward consensus rapidly, and the slope of the $D(k)$ curve is steep. In Mode 2, limited communication under the ring topology causes slower progress towards consensus, with a correspondingly gentle slope. All three LLMs exhibit this two-stage behavior before eventually crossing the dotted $\varepsilon$ threshold and reaching consensus.
 The Gemini MAS takes 14 rounds to reach consensus, while the GPT agents agree much more quickly after 9 rounds. The Claude agents reach consensus after 11 rounds. Under the adaptive strategy, none of the  models exit $\Omega_\varepsilon$ by triggering \textsc{BudgetFail}. 

In Fig. \ref{fig:fig7}, we plot budget dynamics under the adaptive strategy. For all three MAS, switching communication modes precipitates a decisive change in token costs. Because Mode 1 is significantly more expensive, the slopes of the budget plots are very steep for the early rounds. After the system passes the switching threshold (round 4 for GPT, round 5 for Claude and Gemini), the cost per round decreases under Mode 2.

Table \ref{tab:all_summary} summarizes the executions for the complete-only strategy. We show the budget trajectories in Fig. \ref{fig:fig8}. Token expenditures increased significantly for all three models, and the Claude MAS triggered \textsc{BudgetFail} on the final round and exited the safe set $\Omega_\varepsilon$. While the Gemini MAS reached consensus faster (9 rounds under complete-only vs. 15 under the adaptive strategy), GPT took longer to reach consensus (12 rounds under complete-only vs. 9 under adaptive). Differences in how the LLMs processed feedback from fellow agents could account for this difference. Gemini agents found the extra communication afforded by the complete topology helpful for aligning their positions to the rest of the group, and so they reached consensus faster. On the other hand, feedback from the entire group only confused and hampered the GPT agents once they had reached the point where slight adjustments in their belief vectors were needed. The logs from this run show that the GPT MAS disagreement level rose after rounds 7 and 9 (rounds which would have used the ring topology (Mode 2) under the adaptive strategy). Even with our instruction to compromise at every round, the consensus contraction rates for these rounds were $> 1.$ While we derived boundary expressions for the simple case where consensus rates are strictly contracting, the dynamics of real MAS are often more complicated. 

In Fig. \ref{fig:fig9}, we show the budget trajectories under the ring-only strategy; we list the total execution summary for all three models in Table \ref{tab:all_summary}. Due to the cheaper cost of Mode 2, the MAS reach consensus within the budget for all three LLMs. The number of rounds required for this strategy, however, is significantly higher than for the previous two strategies: 17 for the GPT MAS, 18 for the Gemini MAS, and 13 for Claude the MAS. These result aligns with out expectations from the simulations: Mode 2 uses less tokens than Mode 1, but takes longer to reach consensus. Due to the increased number of rounds, the token savings from the ring-only strategy were minimal for the GPT and Gemini MAS.

These experiments validate the intuition given by Theorems~\ref{thm:rac}--\ref{thm:cert} and the earlier simulations. Unsafe behavior occurs when $(\mathbf{X}_0, b_0 )\notin \Omega_{cert}$, as with the Claude MAS under the complete-only strategy. For that deployment, the consensus-contraction rate for every round $\lambda_{\sigma_k}$ remained roughly constant, with an average value of $\lambda_1 \approx 0.7574$ across all 9 rounds. Since this experiment involved Mode 1 only, $\eta = \varepsilon = 0.05 \implies K^* = K_1 \approx 10$. Token costs per round also remained roughly constant, and we can average them to estimate $c_1 \approx $ 12,025. Then $B^*(D(0)) \approx 10 \cdot 12025 = 120250 > 100000  = b_0$. Thus, $(\mathbf{X}_0, b_0) \notin \Omega_{cert}$. As Theorem \ref{thm:cert} predicts, $(\mathbf{X}_9, b_9) \notin \Omega_{\varepsilon}$, and so the MAS triggered \textsc{BudgetFail}.

Debate among real LLMs introduces dynamics not captured by the algebraic surrogate step \eqref{eq:consensus}. The fact that the GPT MAS achieved consensus with the least time and expense under the adaptive strategy suggests that the level of disagreement changes the consensus-contraction rate of a communication topology. When the system is close to consensus, only slight adjustments to each agent's preference vector $x_i$ are needed. At this point, too much information sharing among the agents can be counter-productive because it leads to unnecessarily drastic modifications. The agents are trying to accommodate the entire team, when they only need to adjust based on their nearest neighbors. 

The Claude MAS \textsc{BudgetFail} in the complete-only deployment demonstrates how a competent switching policy is necessary to steer the MAS trajectory into the safe set $\Omega_\varepsilon.$ Under the ring-only strategy, Gemini MAS needed 18 rounds to achieve consensus, with only slight token savings. The adaptive switching policy, however, reaped performance benefits in both speed and cost.


\section{Conclusions}\label{sec:conclusion}

We presented a switched system framework for LLM multi-agent
consensus with norm-threshold switching, a forward-invariance theorem, budget safety certificate, and a Resource-Aware Consensus
Theorem (\ref{thm:cert}) linking spectral contraction rates, consensus time, and minimum
token budget.
Corollary~\ref{cor:tradeoff} certifies when the adaptive policy
Pareto-dominates single-topology strategies.
Live deployments of LLMs in a realistic software-architecture planning workflow
($N=5$ agents, $d=6$ design dimensions) illustrates all theoretical
predictions and demonstrates the efficiency gain of adaptive
communication over both ring-only and complete-only alternatives.

Future directions include dynamic consensus-contraction rates and communication costs. The preceding analysis treats the communication costs
$c_1$ and $c_2$ as fixed constants. In practical multi-agent AI systems, however, both communication cost and collective disagreement depend on the number of agents participating in the deliberation process. As the number of agents $N$ increase, communication complexity and costs also increase. Examining the competing mechanisms which arise in larger teams would be an important step in generalizing the results we present here. 

The consensus-contraction rate $\lambda_\sigma$ of a given communication mode $\sigma$ depends on $N$. A larger team can contribute additional perspectives, specialized knowledge, and alternative reasoning paths. However, these benefits are accompanied by greater heterogeneity in the initial belief state, and increased difficulty in reaching consensus. 

As we saw in the live LLM deployments, the disagreement level $D(k)$ also effects $\lambda_{\sigma_k}.$ Developing an optimal switching policy that balances the complex relationships between team size $N$, disagreement level $D(k)$, consensus-contraction rate $\lambda_{\sigma_k}$, and communication cost $c_{\sigma_k}$ would extend the concept of ``safe" MAS beyond simplified scenarios. 

Developing a method for estimating $\lambda_\sigma$ and $c_\sigma$ for a given MAS configuration would be an essential step for applying our formulations of safe regions in real time. In this work, we estimated parameters by averaging their values for a given deployment, and then reverse-engineered the boundary conditions for $\Omega_{cert}$. In actual practice, however, one would need to know these parameters before the deployment to predict whether the MAS would remain within $\Omega_{\varepsilon}$ given an initial disagreement $D(0)$ and budget $b_0.$ We expect that team size $N$, choice of LLM, and the pre-set temperature of the LLM would heavily effect $\lambda_\sigma$ and $c_\sigma.$

\end{document}